\newtheorem{theorem}{Theorem}[section]
\newtheorem{definition}{Definition}[section]
\newtheorem{lemma}{Lemma}[section]
\newtheorem{remark}{Remark}[section]
\newtheorem{corollary}{Corollary}[section]
\newtheorem{example}{Example}[section]
\newtheorem{proposition}{Proposition}[section]
\newtheorem{conjecture}{Conjecture}[section]
\newcommand{\restr}[1]{|_{#1}}
\begin{document}
\title{Extending  quasi-alternating links}

\author{Nafaa Chbili}
\address{Department of Mathematical Sciences\\College of Science UAE University\\15551 Al Ain, U.A.E.}
\email{nafaachbili@uaeu.ac.ae}
\author{Kirandeep Kaur}
\address{Department of Mathematical Sciences\\College of Science UAE University\\15551 Al Ain, U.A.E.}
\email{kirandeep@uaeu.ac.ae}
\thanks{ This research was funded  by  United Arab Emirates University, UPAR grant $\#$G00002650.}

\begin{abstract}
Champanerkar and Kofman \cite{champanerkar2009twisting} introduced an interesting way to construct new examples of quasi-alternating links from existing ones. Actually, they
 proved that replacing a quasi-alternating crossing $c$ in a quasi-alternating link by a rational tangle of same type  yields a new quasi-alternating link.
This construction has been extended to alternating algebraic tangles and  applied  to  characterize all quasi-alternating Montesinos links.
In this paper, we extend this technique  to any alternating tangle of same type as $c$.  As an application, we give new examples of quasi-alternating knots of 13 and 14 crossings.
Moreover, we prove that the Jones polynomial of a quasi-alternating link that is  obtained in this way  has no gap if the original link has no gap in its Jones polynomial. This  supports a conjecture introduced in   \cite{chbili2019jones}, which states that the Jones polynomial of any prime quasi-alternating link except $(2,n)$-torus link has no gap.\\

\end{abstract}
\keywords{ Quasi-alternating links, Jones polynomial, Alternating tangles}
\makeatletter{\renewcommand*{\@makefnmark}{}
\footnotetext{2010 {\it Mathematics Subject Classifications.} 57M25}\makeatother}

\maketitle
%\tableofcontents
\section{Introduction}
Let $L$ be a non-split  alternating link in the three-sphere $S^3$ and $\Sigma(L)$ be  the branched double-cover of  $S^3$  along $L$. Ozsv\'{a}th and Szab\'{o} \cite{ozsvath2005heegaard} proved that the Heegaard Floer homology of $\Sigma(L)$   is determined  entirely  by   the determinant of the link, $\det(L)$.
Furthermore, they noticed that this interesting homological property extends to a wider family of links that they called quasi-alternating. These links are     defined in the following  recursive way.

\begin{definition} The set $\mathcal{Q}$ of quasi-alternating links is the smallest set satisfying the following properties:
\begin{itemize}
\item  The unknot is in $\mathcal{Q}$.
\item If a link $L$ has a diagram  containing a crossing $c$ such that
\begin{enumerate}
\item both smoothings of the diagram of $L$ at the crossing $c$, $L_0$ and $L_\infty$, as in Fig.~\ref{1},
belong to $\mathcal{Q}$,
\item $\det(L_0)$, $\det(L_\infty) \geq 1$ and  $\det(L) = \det(L_0) + \det(L_\infty)$;
 \end{enumerate}
then $L$ is in $\mathcal{Q}$ and we say that   $L$ is quasi-alternating at the crossing $c$.
\end{itemize}
\end{definition}

 With an elementary induction on the determinant of the link, this definition can be used to prove that non-split alternating links belong to $\mathcal{Q}$  \cite{ozsvath2005heegaard}. The knots $8_{20}$ and $8_{21}$ are the first examples, in the knot table,  of  non-alternating quasi-alternating knots.
  In general,  it is not   easy  to decide  whether a given link is quasi-alternating only by using this recursive  definition.
Throughout  the past years, several obstruction criteria for a link to be  quasi-alternating  have  been introduced. For instance, quasi-alternating links are proved to be  homologically-thin in  both Khovanov homology and link  Floer homology   \cite{manolescu2007khovanov}.  In addition, the reduced odd Khovanov homology group of any quasi-alternating link is thin and torsion-free \cite{ozsvath2013odd}. On the other hand, the branched double-cover of any quasi-alternating link is an $L$-space \cite{ozsvath2005heegaard}, and the signature of an oriented quasi-alternating  link $L$ equals minus four times the Heegaard Floer correction term of $(\sum(L), t_0)$ \cite{lisca2015signatures}.
 The behavior of polynomial invariants of quasi-alternating links has been also  investigated  leading to obstruction criteria  involving the Q-polynomial and the two-variable
Kauffman polynomial  \cite{qazaqzeh2015new, teragaito2014quasi, teragaito2015quasi}.\\
In \cite{champanerkar2009twisting}, Champanerkar and  Kofman  introduced an interesting way to construct  new quasi-alternating links from existing ones. They proved that a  new quasi-alternating link can be obtained by
 replacing a quasi-alternating crossing by an alternating rational tangle of same type. This construction has been generalized to algebraic tangles in \cite{qazaqzehchbiliQublan20014} and applied to the classification of quasi-alternating Montesionos links.\\
In this paper, we prove  that if a quasi-alternating crossing is replaced by any alternating connected tangle of same type, whether  algebraic or non-algebraic, then the resulting link is quasi-alternating at every new non-nugatory crossing.
This construction is illustrated by the example in Fig.~\ref{ne1} where the quasi-alternating crossing $c$ in the diagram of pretzel knot  $P(2,1,-3)$  is replaced by a non algebraic tangle to obtain a new quasi-alternating link $L10n16$.
 This extends the previous  work of  Champanerkar-Kofman\cite{champanerkar2009twisting} and its generalization in \cite{qazaqzehchbiliQublan20014}. As an application of this construction,
  we list few quasi-alternating knots of 13 and  14 crossings, see Section 3.\\

\begin{figure}[!ht] {\centering
 \subfigure[$P(2,1,-3)$] {\includegraphics[scale=1.2]{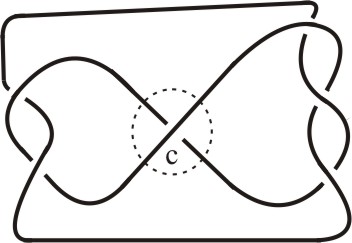} } \hspace{2cm}
\subfigure[$L10n16$] {\includegraphics[scale=1.2]{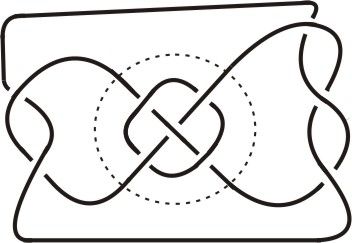}}
\caption{A link obtained from pretzel knot $P(2,1,-3)=5_2$ by extending quasi-alternating crossing $c$ to a non-algebriac tangle}\label{ne1}}
\end{figure}
On the other hand, it is well-known that the Jones polynomial of  an alternating link satisfies interesting properties that have been applied to solve some challenging conjectures in classical knot theory  \cite{kauffman1987state,Murasugi1987alternating,thistlethwaite1987spanning}.
More precisely,   the Jones polynomial of  any prime alternating link which is not a $(2,n)$-torus link is alternating and has no gap. In other words,  if $V_L(t)= \sum_{i=n}^{m}c_it^i$, then its coefficients satisfy $c_ic_{i+1}<0$ for $n\leq i \leq m-1$.
In the case of a quasi-alternating link, the Jones polynomial is alternating as a consequence of that the  reduced  Khovanov homology group of any quasi-alternating link is thin \cite{manolescu2007khovanov}.
 Further, in \cite{chbili2019jones}  Chbili and Qazaqzeh studied the behavior of the  Jones polynomial in case of quasi-alternating links and suggested  the following conjecture:
\begin{conjecture}\label{conj}
If $L$ is a prime quasi-alternating link which is not a $(2,n)$-torus link, then the coefficients of the Jones polynomial of $L$ satisfy $c_ic_{i+1}<0$ for all $n \leq i\leq m-1$.
\end{conjecture}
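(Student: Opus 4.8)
The plan is to attack Conjecture~\ref{conj} by strong induction on $\det(L)$, using the recursive definition of $\mathcal{Q}$ directly. The base cases are the quasi-alternating links of small determinant that are not $(2,n)$-torus links: the alternating ones are classical, and the first non-alternating examples (the knots $8_{20}$ and $8_{21}$) are an easy check from the known formulas. For the inductive step, let $L$ be prime and quasi-alternating at a crossing $c$, with smoothings $L_0$ and $L_\infty$; by definition $\det(L_0),\det(L_\infty)\ge 1$ and $\det(L)=\det(L_0)+\det(L_\infty)$, so $\det(L_0),\det(L_\infty)<\det(L)$ and the inductive hypothesis applies to $L_0$ and $L_\infty$. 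Here one first reduces to prime resolutions: a non-split quasi-alternating link is a connected sum of prime quasi-alternating links, $V$ is multiplicative under connected sum, and a product of gapless alternating polynomials is again gapless and alternating (the sign of the $t^k$-coefficient of such a product is $(-1)^k$ throughout, so no coefficient can vanish); the torus-link factors have to be carried along explicitly, see below.

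The engine of the induction is the Kauffman-bracket skein relation $\langle L\rangle=A\langle L_0\rangle+A^{-1}\langle L_\infty\rangle$ at $c$. Choosing orientations of $L_0$ and $L_\infty$ and applying the writhe normalization turns this into an identity $V_L(t)=\varepsilon_0\,t^{a_0}V_{L_0}(t)+\varepsilon_\infty\,t^{a_\infty}V_{L_\infty}(t)$ with $\varepsilon_0,\varepsilon_\infty\in\{\pm1\}$ and explicit (half-)integers $a_0,a_\infty$ determined by $c$ and the component counts. Since $L_0$ and $L_\infty$ are quasi-alternating their Jones polynomials are alternating by thinness \cite{manolescu2007khovanov}, and by the inductive hypothesis each is (away from the torus-link cases) gapless; so each summand on the right is, up to a global sign, of the form $\pm\sum_i(-1)^i|c_i|\,t^i$ with every $|c_i|>0$ on a full interval. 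The hypothesis $\det(L)=\det(L_0)+\det(L_\infty)$ is the crucial constraint: rewritten as $|V_L(-1)|=|V_{L_0}(-1)|+|V_{L_\infty}(-1)|$ and combined with the skein identity, it forces the two terms $\varepsilon_0 t^{a_0}V_{L_0}$ and $\varepsilon_\infty t^{a_\infty}V_{L_\infty}$ to have the same sign at $t=-1$; together with alternation this pins down the relative sign and the relative parity of $a_0-a_\infty$, so that on the range where the two shifted supports overlap the coefficients \emph{reinforce} rather than cancel. A short estimate comparing the spans of $V_{L_0},V_{L_\infty}$ with the shifts $a_0,a_\infty$ shows that the union of the two shifted supports is a single interval, and one concludes that $V_L$ is gapless and sign-alternating on that interval.

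The main obstacle is precisely the family excluded in the conjecture: a $(2,n)$-torus link genuinely has gaps in its Jones polynomial (already $V_{T(2,3)}=-t^{-4}+t^{-3}+t^{-1}$ omits the $t^{-2}$ term), so the ``sum of gapless pieces'' argument collapses exactly when one resolution $L_0$ or $L_\infty$ is a torus link --- or, worse, a connected sum involving torus-link factors. Getting past this requires strengthening the inductive statement so that it records not just gaplessness but the precise location and structure of the gaps of $V_{L_0}$ and $V_{L_\infty}$ (completely explicit for torus links and their connected sums), and then proving that the monomial shift $t^{a_\infty}$ lands the gapless summand exactly over each gap of the other and fills it, while the resulting $L$ fails to be a $(2,n)$-torus link unless both resolutions already were --- a point I would try to extract from primeness of $L$ together with a Seifert-circle and crossing count in the relevant diagram. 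A secondary, purely technical nuisance is the bookkeeping of half-integer powers of $t$ and of the global sign needed to certify true sign alternation, not merely nonvanishing, across the splice. I expect the real crux to be making the overlap-and-reinforcement analysis uniform over all the torus-link configurations that can occur among the resolutions; that this seems hard to do cleanly is presumably why the conjecture is still open, and why the present paper settles instead the relative statement that the construction of the previous sections preserves gaplessness.
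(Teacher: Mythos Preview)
The statement in question is Conjecture~\ref{conj}, and the paper does \emph{not} prove it; it is presented and left open. What the paper establishes is only the hereditary result Theorem~\ref{thm:V(t)}: if $V_L(t)$ is already gapless and $L'$ is obtained from $L$ by replacing a quasi-alternating crossing by an alternating tangle of the same type, then $V_{L'}(t)$ is again gapless. You acknowledge this yourself in your final sentence, so your text is really a discussion of why the direct induction on $\det(L)$ jams, not a proof of the conjecture. There is therefore nothing in the paper to compare your argument against.

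Beyond the torus-link obstruction you already flag, your sketch has a further genuine gap. The line ``a short estimate comparing the spans of $V_{L_0},V_{L_\infty}$ with the shifts $a_0,a_\infty$ shows that the union of the two shifted supports is a single interval'' is carrying essential weight and is not justified: for an arbitrary quasi-alternating diagram there is no known relation between $\mathrm{span}\,V_{L_0}$, $\mathrm{span}\,V_{L_\infty}$, and the writhe-induced shifts sharp enough to rule out a gap between the two shifted supports (contrast with the alternating case, where span equals crossing number and the degrees can be read off the diagram). The determinant condition $\det(L)=\det(L_0)+\det(L_\infty)$ controls the \emph{signs} at $t=-1$ and hence forces reinforcement on any overlap, but it says nothing about whether the two supports actually meet. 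This missing span control, together with the torus-link factors you identify, is exactly why the paper retreats to the relative statement and proves gaplessness only under the assumption that the seed link already has it.
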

This conjecture has been confirmed for all quasi-alternating links of braid index 3 and for all quasi-alternating Montesinos links. It was also proved that if a quasi-alternating link satisfies this conjecture then any link obtained by Champernkar-Kofman twisting
construction will also satisfy   the conjecture.  In this paper, we prove that the coefficients of the Jones polynomial of any quasi-alternating link that is obtained by replacing  a quasi-alternating crossing by an  alternating tangle, satisfies the condition given in Conjecture~\ref{conj} whenever the Jones polynomial of the original quasi-alternating  link has no gap.

The present  paper is organized as follows. In Section 2, we introduce some background necessary for the rest of the paper. In Section 3, we prove  our main result, Theorem~\ref{Thm1}, and illustrate that by listing some examples of quasi-alternating knots.   Section 4 will be devoted  to confirm that Conjecture~\ref{conj} holds for links obtained by our construction.

\section{Background and notations }
At the beginning of this section, we shall   recall some basic definitions and results about  quasi-alternating links and Jones polynomials which will be  needed in  the rest of  this paper.
The Jones polynomial  \cite{jones1985polynomial} is an invariant of ambient isotopy of oriented links in $S^3$. It is a Laurent polynomial with integral coefficients which  can be defined in several equivalent ways.
 In this paper, we shall use the  Kauffman state model definition of the Jones polynomial \cite{kauffman1987state}. The Kauffman bracket polynomial $<L> \in \mathbb{Z}[A,A^{-1}]$ is an invariant of regular isotopy of links which
 can be defined on link  diagrams by the following relations:
\begin{enumerate}
\item $<O>=1$ and $<O\cup L>=(-A^2-A^{-2})<L>$;
\item If $L$, $L_0$ and $L_\infty$ are 3 link diagrams which differ only in a local region where  they look as  illustrated in Fig.~\ref{1}, then
\[<L>=A<L_0>+A^{-1}<L_\infty>.\]
\end{enumerate}
 \begin{figure}[!ht] {\centering
 \subfigure[$L$]{\includegraphics[scale=0.8]{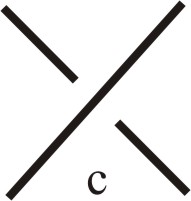} } \hspace{1cm}
\subfigure[$L_0$] {\includegraphics[scale=0.8]{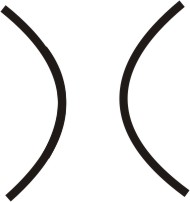}}\hspace{1cm}
\subfigure[$L_\infty$] {\includegraphics[scale=0.8]{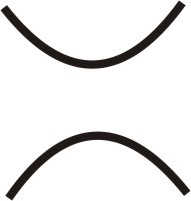}}
\caption{A link $L$ and its smoothings at crossing $c$, $L_0$ and $L_\infty$.}\label{1}}\end{figure}

Kauffman proved that the  bracket polynomial of a link diagram $D$ can be expressed as   a state summation. A state $S$ of a link diagram $D$ is a way to perform smoothings
 at each crossing of $D$. If $\alpha(S)$ and $\beta(S)$ are the number of $A$ and $A^{-1}$-smoothings
 in  the  state $S$, then the bracket polynomial is given by
\[<D>=\displaystyle \sum_{S}A^{\alpha(S)}A^{\beta(S)}(-A^2-A^{-2})^{\sharp(S)-1},\]
where the summation is taken through all the states of $D$ and $\sharp(S)$ is the number of circles obtained after smoothing all the crossings.

 The Jones Polynomial $V_{L}(t)\in\mathbb{Z}[t^{\frac{1}{2}},t^{\frac{-1}{2}}]$, of an oriented link $L$ is defined by
 $$V_{L}(t)=(-A^3)^{-w(L)}<L> \restr{A^{4}=t^{-1}},$$
  where $w(L)$ is the writhe of $L$ defined as the algebraic  sum of  signs of the crossings of $L$,   and
 $<L>$ denotes the bracket polynomial of the link obtained by forgetting the orientation of $L$.

Obviously,  there is no gap in the Jones polynomial if and only if there is a gap of exactly four between any two consecutive powers of $A$ in the Kauffman bracket polynomial.
  For simplicity, we call a polynomial $f(A)$  alternating if powers of $A$ in $f(A)$  differ by multiples of four and coefficients of consecutive terms are of opposite signs.
  Moreover, we say that $f(A)$ is strictly alternating if the gap between any two consecutive terms is exactly four.\\

 A crossing $c$ in a link diagram $L$ is said to be \emph{nugatory} if it looks as pictured in  Fig.~\ref{nt}.

 \begin{figure}[!ht] {\centering
 \includegraphics[scale=.45]{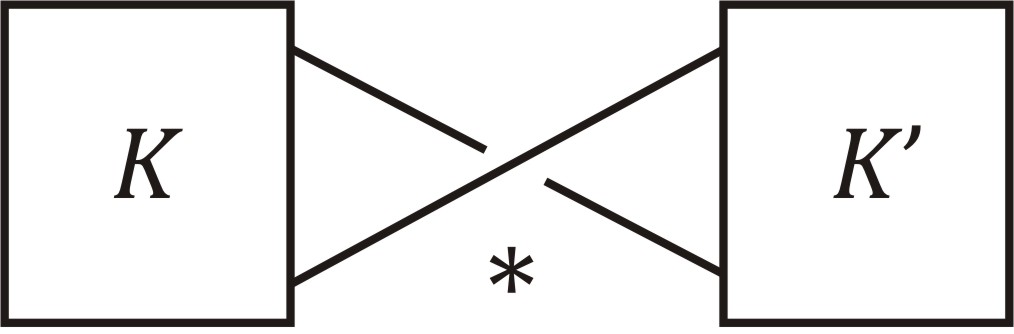}
\caption{Nugatory crossing}\label{nt}}\end{figure}
\begin{remark}\label{remk1} Let $c$ be a nugatory crossing of a link diagram $L$. Then $<L_0>=(-A^2-A^{-2})<L_\infty>$ if $L_0$ is a split link and $<L_\infty>=(-A^2-A^{-2})<L_0>$ if $L_\infty$ is a split link.
\end{remark}

A tangle is a proper  embedding of disjoint union of 2 arcs and possibly  loops into a three-ball $B^3$ such that the end points of these arcs lay on 4 fixed points of the boundary of $B^3$.
Tangles are represented by their planar diagrams and are considered up to isotopy of $B^3$ keeping the boundary fixed. We say that a tangle is connected if it is represented by a connected diagram.
Throughout this paper, unless otherwise stated, we are only considering connected tangles.
Given a tangle $T$, one can join the end points of $T$ by simple arcs to define the numerator closure $N(T)$ and the denominator  closure $D(T)$.
 We call a crossing $c$ of a tangle $T$  nugatory in $T$ if it is  nugatory in both the closures of $T$. While a non-nugatory crossing in a tangle $T$ is said to be trivial if it is a nugatory crossing either in the numerator closure $N(T)$ or in the denominator closure $D(T)$ of $T$. For example, the crossing denoted by $*$ in Fig~\ref{nt2}(a) is a nugatory crossing. However, the crossing in Fig~\ref{nt2}(b) is a trivial crossing.
The crossings in a tangle which are neither nugatory nor trivial are called  non-trivial crossing.
\begin{figure}[!ht] {\centering
 \subfigure[]{\includegraphics[scale=.55]{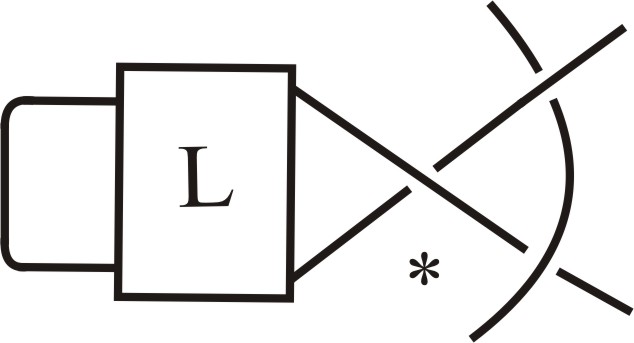}} \hspace{1cm}
 \subfigure[]{\includegraphics[scale=.55]{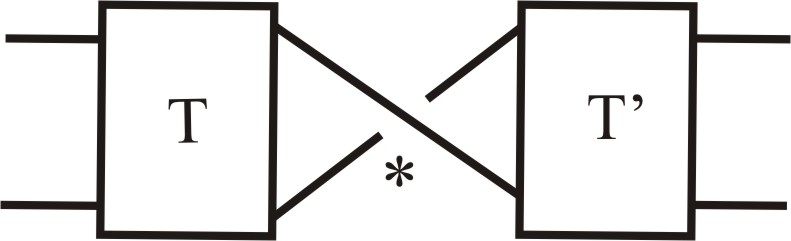}}
\caption{Nugatory crossing and trivial crossing of a tangle diagram}\label{nt2}}\end{figure}
\begin{remark}\label{remk2}
Non-trivial crossings of an alternating  tangle $T$ are quasi-alternating in both the closures of $T$. A trivial crossing of an alternating  tangle $T$ is  quasi-alternating in one of the closures of $T$.
\end{remark}
The determinant of a link $L$, $\det(L)$, is a well-known numerical invariant of links  that  can be defined from the Jones polynomial by $\det(L)=|V_L(-1)|$.
This invariant  can  be computed by considering  the number of spanning trees in a checkerboard graph of a
 link projection. Let us now  recall this computation  method.
\begin{figure}[!ht] {\centering\includegraphics[scale=.8]{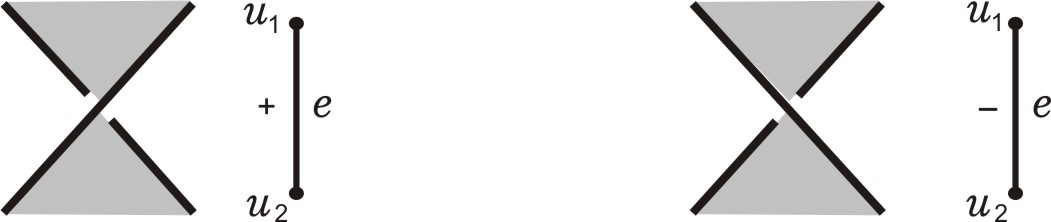}
\caption{}\label{2}}\end{figure}

A Tait graph, $\mathcal{G}(L)$, of a connected link diagram $L$ is a  graph which is obtained by coloring the diagram in  checkerboard fashion, followed by assigning a vertex to every shaded region and an edge to every crossing with a sign as illustrated in Fig.~\ref{2}.
 Then, the determinant of the  link $L$ is given as follows:

\begin{lemma}[\cite{champanerkar2009twisting}]For any spanning tree $\mathsf{T}$ of $\mathcal{G}(L)$, let $v(\mathsf{T})$ be the number of positive edges in $\mathsf{T}$. Let $s_{v}(L)=\sharp\{\text{spanning trees $\mathsf{T}$ of~} \mathcal{G}(L)\mid v(\mathsf{T})=v \}$. Then
\[\det(L)=\Big|\displaystyle \sum_v (-1)^vs_v(L)\Big|.\]
\end{lemma}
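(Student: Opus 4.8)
The plan is to evaluate the Kauffman bracket state sum at the point that computes the determinant and then reorganize it as a sum over spanning trees of the Tait graph. Since $\det(L)=|V_L(-1)|$ and $|(-A^3)^{-w(L)}|=1$ whenever $A$ lies on the unit circle, we have $\det(L)=\bigl|\langle L\rangle\bigr|$ evaluated at any $A$ with $A^4=-1$; fix $A$ with $A^2=i$. At such an $A$ one has $-A^2-A^{-2}=0$, so in the state summation every state $S$ with $\sharp(S)\geq 2$ contributes $0$ and the only surviving terms come from states with $\sharp(S)=1$, each contributing $A^{\alpha(S)-\beta(S)}$. Since $\alpha(S)+\beta(S)=c$, the number of crossings of the diagram, this equals $A^{-c}i^{\alpha(S)}$, so
\[
\langle L\rangle\big|_{A^2=i}=A^{-c}\!\!\sum_{S:\,\sharp(S)=1}\!\! i^{\alpha(S)}.
\]

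Next I would invoke the classical correspondence between one-circle states and spanning trees. Fix the checkerboard coloring used to construct $\mathcal{G}(L)$. At each crossing, one of the two smoothings merges the two shaded regions meeting there and the other merges the two unshaded ones; calling a crossing \emph{selected} by a state when its smoothing merges shaded regions, planar tree/cotree duality shows that a state yields a single circle precisely when the selected crossings form a spanning tree $\mathsf{T}$ of $\mathcal{G}(L)$ (the complementary crossings then forming a spanning tree of the dual graph). This is a bijection $S\leftrightarrow\mathsf{T}$. For an edge in $\mathsf{T}$ the merge-shaded smoothing is the $A$- or the $A^{-1}$-smoothing according to the sign of that edge in Fig.~\ref{2}, and the opposite holds for an edge in the cotree, so counting $A$-smoothings gives $\alpha(S)=2\,v(\mathsf{T})+\kappa$ with $\kappa$ an integer depending only on the diagram, not on the state; hence $i^{\alpha(S)}=i^{\kappa}(-1)^{v(\mathsf{T})}$.

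Substituting back, $\langle L\rangle|_{A^2=i}=A^{-c}i^{\kappa}\sum_{\mathsf{T}}(-1)^{v(\mathsf{T})}=A^{-c}i^{\kappa}\sum_v(-1)^v s_v(L)$, and taking absolute values removes the unit $A^{-c}i^{\kappa}$, giving $\det(L)=\bigl|\sum_v(-1)^v s_v(L)\bigr|$. The genuinely delicate step is the second paragraph: setting up the one-circle-state/spanning-tree bijection and, above all, pinning down the sign conventions precisely enough to obtain the clean identity $\alpha(S)=2\,v(\mathsf{T})+\kappa$. An alternative route that avoids the bracket altogether is to note that the reduced Laplacian of $\mathcal{G}(L)$ with each edge weighted by its sign $\pm1$ is, up to conventions, a cofactor of the Goeritz matrix of the diagram, whose determinant has absolute value $\det(L)$; the weighted Matrix--Tree Theorem then gives $\det(\mathrm{Goeritz})=\sum_{\mathsf{T}}\prod_{e\in\mathsf{T}}\operatorname{sign}(e)=\pm\sum_v(-1)^v s_v(L)$, and one concludes as before. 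There the delicate point instead becomes matching the edge-sign convention of Fig.~\ref{2} with the Goeritz sign convention.
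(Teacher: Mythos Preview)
The paper does not give its own proof of this lemma: it is quoted verbatim from \cite{champanerkar2009twisting} as background, so there is nothing to compare your argument against on that front.

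Your bracket-at-$A^2=i$ argument is correct and is essentially the standard route to this identity. The two points you flag as delicate can in fact be nailed down cleanly. For the bijection: with $\mathcal{G}(L)$ the shaded Tait graph, a state $S$ determines the edge set $F\subseteq E(\mathcal{G}(L))$ of crossings smoothed in the shade-merging direction, and one has $\sharp(S)-1=n(F)+n^*(E\setminus F)$, where $n$ and $n^*$ denote nullity in $\mathcal{G}(L)$ and its planar dual; hence $\sharp(S)=1$ iff $F$ is acyclic and $(V,F)$ is connected, i.e.\ $F$ is a spanning tree. For the sign bookkeeping: if the diagram has $p$ positive and $n$ negative edges and the tree $\mathsf{T}$ has $k$ edges with $v=v(\mathsf{T})$ positive ones, then under either consistent convention one gets $\alpha(S)=2v+(n-k)$ or $\alpha(S)=-2v+(p+k)$, so in both cases $i^{\alpha(S)}=(\text{unit})\cdot(-1)^{v}$, exactly your $\alpha(S)=2v(\mathsf{T})+\kappa$. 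With these two verifications made explicit the proof is complete; your alternative Goeritz/Matrix--Tree route is also valid and is closer in spirit to how the formula was originally derived.
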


\begin{remark} Consider a quasi-alternating link $L$ and a quasi-alternating crossing $c$ of $L$. If $e$ is the corresponding edge in $\mathcal{G}(L)$, then there is one-to-one correspondence between spanning trees of $\mathcal{G}(L)$ that contain $e$ (respectively, does not contain $e$) and spanning trees of $\mathcal{G}(L_0)$ (respectively, $\mathcal{G}(L_\infty)$). Therefore,
\[  s_{v}(L_\infty)=\sharp\{\text{spanning trees $\mathsf{T}$ of~} \mathcal{G}(L)\mid e\notin \mathsf{T} \text{~and~} v(\mathsf{T})=v \} \]
\[s_{v-1}(L_0)=\sharp\{\text{spanning trees $\mathsf{T}$ of~} \mathcal{G}(L)\mid e\in \mathsf{T} \text{~and~} v(\mathsf{T})=v \},\]  \text{and }

\begin{equation} \label{eq1} \det(L)=\Big|\displaystyle \sum_v (-1)^vs_v(L)\Big|=\Big|\displaystyle \sum_v (-1)^vs_{(v-1)}(L_0)+\displaystyle \sum_v (-1)^vs_v(L_\infty)\Big|.
\end{equation}

 By definition of a quasi-alternating crossing, $\det(L)=\det(L_0)+\det(L_{\infty})$ and hence from equation  (\ref{eq1}) above we have
\[\Big(\displaystyle \sum_v (-1)^vs_{(v-1)}(L_0)\Big).\Big(\displaystyle \sum_v (-1)^vs_v(L_\infty)\Big)>0.\]
\end{remark}

\section{The Main Theorem}
 Before stating  the main result in  this section, let us introduce  some basic definitions and notations.
 A spanning tree of a connected planar graph $\mathcal{G}$, is a connected, acyclic sub-graph that contains all  vertices of $\mathcal{G}$. Given a graph $\mathcal{G}$, we  denote by $\tau(\mathcal{G})$  the number of spanning trees in  $\mathcal{G}$.
\begin{definition} A sub-graph $\mathcal{H}$ of a connected planar graph $\mathcal{G}$  is said to be almost spanning tree with respect to vertices $u_1$ and $u_2$, if there is no path between $u_1$ and $u_2$, and by adding one edge it becomes a spanning tree of $\mathcal{G}$.
\end{definition}
\begin{definition} An alternating tangle $T$ is said to be positive (respectively,  negative) if the edges of Tait graph $\mathcal{G}(T)$, with induced checkerboard coloring as shown in Fig.~\ref{2}, are positive (respectively,  negative).
\end{definition}
Let $e$ be the edge corresponding to a crossing $c$ with checkerboard coloring as illustrated in Fig.~\ref{2}. We say that an alternating tangle $T$ extends the  crossing $c$ (or a tangle of same type as $c$),
 if $T$ is positive whenever  $e$ is positive and negative whenever  $e$ is negative. The tangle shown in Fig.~\ref{3} is a positive tangle.
 \begin{figure}[!ht] {\centering\includegraphics[scale=.8]{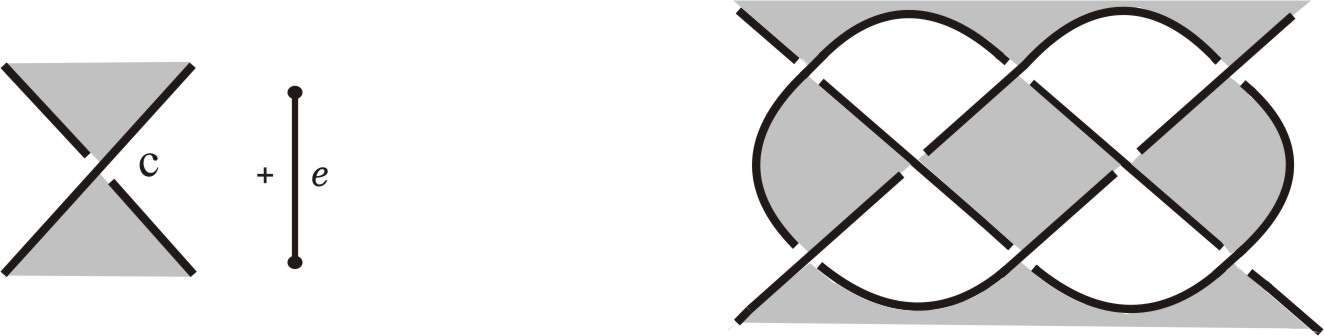}
\caption{A tangle extending  the crossing $c$}\label{3}}\end{figure}

\begin{lemma}\label{ext:lem}
Let $L'$ be a link obtained from a quasi-alternating link diagram $L$ by replacing  a quasi-alternating crossing $c$ by a reduced alternating tangle $T$ extending  $c$. Then at every crossing of $T$ in $L'$ we have,
\[\det(L')=\det(L'_{0})+\det(L'_{\infty}).\]
\end{lemma}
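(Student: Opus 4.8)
The plan is to pass to Tait graphs and to identify all three determinants $\det(L')$, $\det(L'_0)$, $\det(L'_\infty)$ as values of one weighted count of spanning subgraphs of the tangle graph, a count which obeys deletion--contraction. After possibly replacing everything by its mirror image --- under which determinants, quasi-alternating-ness, and the hypotheses of the lemma are preserved while positive and negative crossings are interchanged --- I assume $c$ is positive; then the tangle $T$ extending $c$ is a positive alternating tangle, so \emph{every} edge of $\mathcal G(T)$ is positive. Fix the checkerboard colouring making $c$ the edge $e$ of $\mathcal G(L)$. The point of the hypothesis that $T$ extends $c$ is exactly that the colouring of $L$ outside $c$ and the colouring of $T$ agree along the gluing circle, so the Tait graph of $L'$ is $\mathcal G(L')=H\cup G$, where $H:=\mathcal G(L)\setminus e=\mathcal G(L_\infty)$ and $G:=\mathcal G(T)$ are glued at the two endpoints $u_1,u_2$ of $e$. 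Here $u_1\neq u_2$ (as $c$ is non-nugatory, $e$ is not a loop), $H$ is connected (as $\det(L_\infty)\ge1$, $e$ is not a bridge), and $G$ is connected and loopless ($T$ is a connected reduced tangle). A crossing $f$ of $T$ is an edge of $G$, and smoothing it in $L'$ gives $\{\mathcal G(L'_0),\mathcal G(L'_\infty)\}=\{H\cup(G/f),\,H\cup(G\setminus f)\}$; so it suffices to compute $\det(H\cup G')$ for $G'\in\{G,\,G\setminus f,\,G/f\}$.

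The first step is a product formula. For a graph $\Gamma$ with two marked vertices write $\tau(\Gamma)$ for the number of spanning trees and $\tau_2(\Gamma)$ for the number of spanning forests with exactly two components separating the marked vertices (with $\tau_2(\Gamma)=0$ when the two marked vertices coincide). Setting $Q(\Gamma)=\sum_{\mathsf{T}}(-1)^{v(\mathsf{T})}$ over spanning trees $\mathsf{T}$ and $R(\Gamma)=\sum_{\phi}(-1)^{v(\phi)}$ over the separating two-component forests $\phi$, one sorts the spanning trees of $H\cup G'$ by whether the unique $u_1$--$u_2$ path they contain stays in $H$ or in $G'$; in the first case the trace on $G'$ is a separating two-forest and the trace on $H$ a spanning tree (and conversely), symmetrically in the second, and $v$ is additive, so this yields
\[ Q(H\cup G') \;=\; Q(H)\,R(G') \;+\; R(H)\,Q(G'). \]
Taking $G'$ a single edge recovers $\det(L)=\bigl|Q(H)-R(H)\bigr|$; since $c$ is quasi-alternating this equals $\det(L_0)+\det(L_\infty)=|R(H)|+|Q(H)|$, which forces $Q(H)$ and $R(H)$ to be non-zero of opposite sign, say $Q(H)=\varepsilon a$, $R(H)=-\varepsilon b$ with $\varepsilon\in\{\pm1\}$, $a=\det(L_\infty)\ge1$, $b=\det(L_0)\ge1$. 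Now, since $G$, $G\setminus f$, $G/f$ have all edges positive, a spanning tree of such a graph $G'$ has $|V(G')|-1$ edges and a separating two-forest has $|V(G')|-2$, so $Q(G')=(-1)^{|V(G')|-1}\tau(G')$ and $R(G')=(-1)^{|V(G')|-2}\tau_2(G')$, whence
\[ \det(H\cup G') \;=\; \bigl|\,aR(G')-bQ(G')\,\bigr| \;=\; a\,\tau_2(G')+b\,\tau(G') \;=:\; P(G'). \]
Thus $\det(L')=P(G)$, and $\{\det(L'_0),\det(L'_\infty)\}=\{P(G/f),P(G\setminus f)\}$.

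The second step is that $P$ obeys deletion--contraction. Both $\tau$ and $\tau_2$ do: a spanning tree, respectively a separating two-forest, of $G$ either misses $f$ (and is one of $G\setminus f$, on the same edge set) or contains $f$ (and becomes one of $G/f$ after contraction; when $f=u_1u_2$, no separating two-forest can contain $f$, matching $\tau_2(G/f)=0$). Hence $P(G)=P(G\setminus f)+P(G/f)$, and therefore
\[ \det(L') \;=\; P(G) \;=\; P(G\setminus f)+P(G/f) \;=\; \det(L'_\infty)+\det(L'_0), \]
which is the assertion.

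The main obstacle is the bookkeeping of the first step, in two forms. One is the identification $\mathcal G(L')=H\cup G$ with all of $G$'s edges positive: this is exactly where ``$T$ extends $c$'' is needed, and it must be checked on the level of the checkerboard colouring. The other is the degenerate cases. If $f$ is a bridge of $G$ then $G\setminus f$ is disconnected, $\tau(G\setminus f)=0$, and $P(G\setminus f)$ even vanishes when $u_1,u_2$ end up in the same component of $G\setminus f$; this is the case $\det(L'_\infty)=0$, which is harmless for the present identity (although it means that crossing is not quasi-alternating in $L'$). If $f$ joins $u_1$ to $u_2$, contracting $f$ identifies the two marked vertices, so $G/f$ is glued to $H$ along a single vertex; then the product formula degenerates to $Q(H\cup(G/f))=R(H)\,Q(G/f)$, which one verifies via the bijection between spanning trees of $H/\{u_1=u_2\}$ and separating two-forests of $H$. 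Beyond this careful case analysis the only real input is the sign count: it is precisely the alternating nature of $T$ that makes every spanning tree and every separating two-forest of $G'$ carry the same sign, so that $|aR(G')-bQ(G')|$ has no cancellation and equals $a\tau_2(G')+b\tau(G')$.
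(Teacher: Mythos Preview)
Your proof is correct and follows essentially the same approach as the paper: both decompose spanning trees of $\mathcal G(L')$ by their trace on the tangle subgraph (the paper's ``almost spanning tree with respect to $u_1,u_2$'' is exactly your separating two-forest, and its formula $\det(L')=x\det(L_0)+y\det(L_\infty)$ is your $P(G)=b\,\tau(G)+a\,\tau_2(G)$), use that all edges of $\mathcal G(T)$ are positive so the tangle contribution carries a uniform sign, and invoke the quasi-alternating hypothesis at $c$ to rule out cancellation between the $L_0$- and $L_\infty$-pieces. Your packaging via the product formula $Q(H\cup G')=Q(H)R(G')+R(H)Q(G')$ and deletion--contraction for $P$ is a tidy reorganisation of the same computation; one cosmetic difference is that the paper simply chooses the checkerboard colouring so that $e$ is positive, rather than passing to the mirror image.
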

\begin{proof}
Let $c$ be a quasi-alternating crossing of a link diagram $L$ and $e$ be its corresponding edge in $\mathcal{G}(L)$.
%Assume that $\epsilon(c)=1$.
Let $T$ be a tangle which extends   crossing $c$ and $L'$ be the  link obtained from $L$ by replacing   $c$ by  tangle $T$.

Choose the checkerboard coloring of $L$ such that the edge $e(=u_1u_2)$ in $\mathcal{G}(L)$ is positive. Let $\mathcal{G}(L')$ be the Tait graph of $L'$ with the induced checkerboard coloring, and $\mathcal{G}(T)$ be the sub-graph $\mathcal{G}(L')\restr{T}$ of $\mathcal{G}(L')$. Then all the edges of $\mathcal{G}(T)$ are positive.
Assume that $\mathcal{G}(T)$ has $n+2$  vertices, and let $x=\tau(\mathcal{G}(T))$ and $y$ be the number of almost spanning trees of $\mathcal{G}(T)$ with respect to $u_1$ and $u_2$.
%$y=\tau (\mathcal{G}(T)\setminus \{u_1\})+\tau (\mathcal{G}(T)\setminus \{u_2\})$.

Let $c'$ be a crossing of $L'$ that belongs to $T$ and $e'$ be its corresponding edge in $\mathcal{G}(T)$.
Suppose that $x_{e'}$ and $y_{e'}$ are the number of spanning trees and almost spanning trees of $\mathcal{G}(T)$ which contains edge $e'$, respectively.

Then, for each spanning tree $\mathsf{T}$ of $\mathcal{G}(L)$ such that $e \in \mathsf{T}$, there exists
 $xs_{v(\mathsf{T})-1}(L_0)$ spanning trees in $\mathcal{G}(L')$
such that $u_1$ and $u_2$ are connected through sub-graph $\mathcal{G}(T)$ of $\mathcal{G}(L')$, more precisely, through spanning trees of $\mathcal{G}(T)$. Out of these, there are  $x_{e'}s_{v(\mathsf{T})-1}(L_0)$  spanning trees of $\mathcal{G}(L')$ which contain edge $e'$.

Since $\mathcal{G}(T)$ has $n+2$  vertices, every spanning tree of $\mathcal{G}(T)$  has $n+1$ edges. Therefore, every spanning tree of $\mathcal{G}(L')$ corresponding to a spanning tree $e\in \mathsf{T}$ of $\mathcal{G}(L)$ has $v(\mathsf{T})+n$ positive edges.

Similarly, for each spanning tree $\mathsf{T}$ of $\mathcal{G}(L)$ such that $e \notin \mathsf{T}$, there exists  $ys_{v(\mathsf{T})}(L_\infty)$ spanning trees in $\mathcal{G}(L')$
such that $u_1$ and $u_2$ are not connected through sub-graph $\mathcal{G}(T)$ of $\mathcal{G}(L')$. More precisely, every spanning tree of $\mathcal{G}(L')$ that corresponds to a spanning tree $e\notin\mathsf{T}$ of $\mathcal{G}(L)$ is a union of $\mathsf{T}$ with an almost spanning tree of $\mathcal{G}(T)$ with respect to $u_1$ and $u_2$. Out of these, there are   $y_{e'}s_{v(\mathsf{T})}(L_\infty)$  spanning trees of $\mathcal{G}(L')$ which contain edge $e'$.

Since a spanning tree of $\mathcal{G}(T)$ has $n+1$ edges, an almost spanning tree of $\mathcal{G}(T)$ has $n$ edges. Therefore, every spanning tree of $\mathcal{G}(L')$ that corresponds to a spanning tree $e\notin \mathsf{T}$ of $\mathcal{G}(L)$ has $v(\mathsf{T})+n$  positive edges.

Hence, for every $v'$ such that $v'=v+n$, we have
\begin{equation}\label{Thm:eq1}
\begin{split}
 s_{v'}(L')&=xs_{v-1}(L_0)+ys_{v}(L_\infty)\\
\Big| \displaystyle \sum_{v'}(-1)^{v'}s_{v'}(L') \Big|&=\Big| \displaystyle \sum_{v'}(-1)^{v'} (xs_{v-1}(L_0)+ys_{v}(L_\infty) )\Big|\\
&=\Big| \displaystyle \sum_{v}(-1)^{v} (xs_{v-1}(L_0)+ys_{v}(L_\infty))\Big|
\end{split}
\end{equation}
\begin{equation}\label{Thm:eq2}
\begin{split}
 s_{v'-1}(L'_{0})&=x_{e'}s_{v-1}(L_0)+y_{e'}s_{v}(L_\infty)\\
\Big| \displaystyle \sum_{v'}(-1)^{v'-1}s_{v'-1}(L'_0)\Big|&=\Big|\displaystyle \sum_{v'}(-1)^{v'-1} (x_{e'}s_{v-1}(L_0)+y_{e'}s_{v}(L_\infty))\Big| \\
&=\Big|\displaystyle \sum_{v}(-1)^{v} (x_{e'}s_{v-1}(L_0)+y_{e'}s_{v}(L_\infty))\Big|,
\end{split}
\end{equation}
\begin{equation}\label{Thm:eq3}
\begin{split}
s_{v'}(L'_{\infty})&=(x-x_{e'})s_{v-1}(L_0)+(y-y_{e'})s_{v}(L_\infty)\\
\Big| \displaystyle \sum_{v'}(-1)^{v'}s_{v'}(L'_{\infty})\Big|&= \Big|\displaystyle \sum_{v'}(-1)^{v'} ((x-x_{e'})s_{v-1}(L_0)+(y-y_{e'})s_{v}(L_\infty))\Big|\\
&=\Big|\displaystyle \sum_{v}(-1)^{v} ((x-x_{e'})s_{v-1}(L_0)+(y-y_{e'})s_{v}(L_\infty))\Big|,
\end{split}
\end{equation}

where $L'_0$ and $L'_\infty$ are links obtained from $L'$ by smoothing crossing $c'$.

 Since $L$ is quasi-alternating at $c$, $\displaystyle \sum_{v}(-1)^{v} s_{(v-1)}(L_0). \displaystyle \sum_{v}(-1)^{v}s_{v}(L_\infty)>0$. Thus from equations (\ref{Thm:eq1}),~(\ref{Thm:eq2}) and ~(\ref{Thm:eq3}) above, we get
\begin{align*}
\begin{split}
\det(L')&=\Big|\displaystyle \sum_{v'}(-1)^{v'}s_{v'}(L')\Big|= x \det(L_0)+y \det(L_\infty),\\
\det(L'_{0})&=\Big|\displaystyle \sum_{v'}(-1)^{v'-1}s_{v'-1}(L'_{0})\Big|= x_{e'} \det(L_0)+y_{e'} \det(L_\infty),\\
\det(L'_{\infty})&=\Big|\displaystyle \sum_{v'}(-1)^{v'}s_{v'}(L'_{\infty})\Big|= (x-x_{e'}) \det(L_0)+(y-y_{e'}) \det(L_\infty),
\end{split}
\end{align*}
and hence $\det(L')=\det(L'_{0})+\det(L'_{\infty})$.
\end{proof}

Now, we state the following lemma which will be needed in the proof of Theorem~\ref{Thm1}. The result  follows  directly  from Lemma 2.3 of \cite{champanerkar2009twisting}.
\begin{lemma}\label{lem:con} If $L_1$ and $L_2$ are two quasi-alternating links, then any connected sum of $L_1$ and  $L_2$ is  quasi-alternating.
\end{lemma}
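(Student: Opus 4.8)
The plan is to induct on $\det(L_1)$, keeping $L_2$ fixed, with the multiplicativity of the determinant under connected sum as the main ingredient. First note that every quasi-alternating link other than the unknot has determinant at least $2$: a link added to $\mathcal{Q}$ at a crossing $c$ satisfies $\det = \det(L_0)+\det(L_\infty)\ge 1+1$. Hence in the base case $\det(L_1)=1$ the link $L_1$ is the unknot, so $L_1\#L_2=L_2\in\mathcal{Q}$ and there is nothing to prove.

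For the inductive step assume $\det(L_1)\ge 2$ and fix a diagram $D_1$ of $L_1$ with a crossing $c$ at which $L_1$ is quasi-alternating; thus $(L_1)_0,(L_1)_\infty\in\mathcal{Q}$ have determinant at least $1$ and strictly less than $\det(L_1)$, with $\det(L_1)=\det((L_1)_0)+\det((L_1)_\infty)$. Given an arbitrary connected sum $L=L_1\#L_2$, I would realize it by a diagram $D$ built from $D_1$ and a diagram $D_2$ of $L_2$ joined by a band placed in a region disjoint from $c$; then $c$ is a crossing of $D$, and smoothing $D$ at $c$ produces diagrams of connected sums $L_0=(L_1)_0\#L_2$ and $L_\infty=(L_1)_\infty\#L_2$ (the band arc being untouched by the smoothing). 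Applying the inductive hypothesis to the pairs $((L_1)_0,L_2)$ and $((L_1)_\infty,L_2)$, which is legitimate since both first factors have determinant smaller than $\det(L_1)$, gives $L_0,L_\infty\in\mathcal{Q}$.

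It then remains to verify the two numerical conditions of the recursive definition of $\mathcal{Q}$ at the crossing $c$, and for this I would use $\det(L_1\#L_2)=\det(L_1)\det(L_2)$. Choosing a checkerboard colouring of $D$ in which the (merged) unbounded region is shaded, the Tait graph $\mathcal{G}(D)$ is obtained from $\mathcal{G}(D_1)$ and $\mathcal{G}(D_2)$ by identifying the vertex of that region, with all edge signs inherited; so each spanning tree of $\mathcal{G}(D)$ is a disjoint union of a spanning tree of each factor, whence $s_v(L)=\sum_{a+b=v}s_a(L_1)s_b(L_2)$ and, by the spanning-tree determinant formula of \cite{champanerkar2009twisting}, $\det(L)=\det(L_1)\det(L_2)$. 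Therefore $\det(L_0)=\det((L_1)_0)\det(L_2)\ge 1$, $\det(L_\infty)=\det((L_1)_\infty)\det(L_2)\ge 1$, and $\det(L_0)+\det(L_\infty)=\big(\det((L_1)_0)+\det((L_1)_\infty)\big)\det(L_2)=\det(L_1)\det(L_2)=\det(L)$, so $L$ is quasi-alternating at $c$ and the induction closes. (Equivalently, one could derive $\det(L_1\#L_2)=\det(L_1)\det(L_2)$ from the multiplicativity of the Kauffman bracket under connected sum together with $\det(L)=|V_L(-1)|$.)

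The only delicate point, and the step I expect to require the most care, is the bookkeeping of the connected sum itself: one must check that the chosen diagram $D$ genuinely exhibits $c$ as a crossing whose two smoothings are honest connected sums of $(L_1)_0$, respectively $(L_1)_\infty$, with $L_2$ (in particular that smoothing $c$ does not disturb the gluing arc), and that the required checkerboard colouring can be arranged simultaneously for the two summands — which it can, since each diagram admits two colourings chosen independently. Everything else is the standard induction on the determinant underlying the definition of $\mathcal{Q}$. Alternatively, as the excerpt already indicates, one may bypass the explicit Tait-graph computation and simply cite Lemma~2.3 of \cite{champanerkar2009twisting}, which records exactly this wedge/spanning-tree behaviour of $\mathcal{G}$ under connected sum.
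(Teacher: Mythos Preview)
Your argument is correct. The paper does not actually prove this lemma: it simply records that the result follows directly from Lemma~2.3 of \cite{champanerkar2009twisting}. What you have written is precisely the standard argument behind that citation --- induction on $\det(L_1)$, using that a connected-sum diagram inherits the quasi-alternating crossing $c$ from $D_1$, together with the multiplicativity $\det(L_1\#L_2)=\det(L_1)\det(L_2)$ --- so your approach and the paper's are the same, with yours spelled out in full rather than deferred to the literature.
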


Our main result in this paper can be stated as follows.
\begin{theorem}\label{Thm1}
Let $L$ be a quasi-alternating link diagram at a crossing $c$.
Let $L'$ be the  link obtained from  $L$ by replacing $c$ by a reduced alternating tangle $T$ that extends $c$. Then $L'$ is  quasi-alternating at every crossing of $T$.
\end{theorem}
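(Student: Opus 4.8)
The plan is to argue by induction on the number $n$ of crossings of the reduced alternating tangle $T$. When $n=1$ the tangle $T$ is the single crossing $c$ (of the same type), so $L'=L$ and there is nothing to prove. For $n\geq 2$, fix a crossing $c'$ of $T$ and let $e'$ be the corresponding edge of $\mathcal{G}(T)$; we must verify the three defining conditions of a quasi-alternating crossing for $c'$ in $L'$. The identity $\det(L')=\det(L'_0)+\det(L'_\infty)$ is exactly Lemma~\ref{ext:lem}, so the real content is the positivity of $\det(L'_0)$ and $\det(L'_\infty)$ together with the membership $L'_0,L'_\infty\in\mathcal{Q}$.

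For positivity I would recycle the identities obtained inside the proof of Lemma~\ref{ext:lem}, namely $\det(L'_0)=x_{e'}\det(L_0)+y_{e'}\det(L_\infty)$ and $\det(L'_\infty)=(x-x_{e'})\det(L_0)+(y-y_{e'})\det(L_\infty)$, where all the coefficients are non-negative and $\det(L_0),\det(L_\infty)\geq 1$ because $L$ is quasi-alternating at $c$. Since $T$ is reduced, $e'$ is not a loop of $\mathcal{G}(T)$, so $e'$ lies in some spanning tree and $x_{e'}\geq 1$, whence $\det(L'_0)\geq 1$. For $\det(L'_\infty)$: if $e'$ is not a bridge of $\mathcal{G}(T)$ then $x-x_{e'}\geq 1$; if $e'$ is a bridge, then $\mathcal{G}(T)\setminus e'$ splits into two components, and were $u_1$ and $u_2$ in the same one, $e'$ would remain a bridge in the Tait graphs of both closures $N(T)$ and $D(T)$, making $c'$ nugatory in $T$ and contradicting reducedness; hence $u_1,u_2$ are separated and a spanning tree of each component gives an almost spanning tree of $\mathcal{G}(T)$ with respect to $u_1,u_2$ avoiding $e'$, so $y-y_{e'}\geq 1$. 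In either case $\det(L'_\infty)\geq 1$.

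To obtain $L'_0,L'_\infty\in\mathcal{Q}$ I would note that the two smoothings of $T$ at $c'$ are obtained from $\mathcal{G}(T)$ by deleting or contracting $e'$, hence are again alternating tangles all of whose edges are positive, so each still extends $c$. Erasing their nugatory crossings one at a time — which does not change the isotopy type of the corresponding link — leaves, in each case, either (i) a reduced connected alternating tangle extending $c$ with at most $n-1$ crossings, so that the link is quasi-alternating by the induction hypothesis; or (ii) the $0$- or $\infty$-tangle, so that the link equals $L_0$ or $L_\infty$ and is quasi-alternating by hypothesis; or (iii) a disconnected diagram, in which case the link is a connected sum of a link of type (i) or (ii) with closures of alternating tangles having fewer crossings, all quasi-alternating (non-split alternating links lie in $\mathcal{Q}$), so the link is quasi-alternating by Lemma~\ref{lem:con}. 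This closes the induction; note that no split factor can appear in case (iii), since that would force the associated determinant to vanish, contradicting the preceding paragraph.

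The step I expect to be the real obstacle is the bookkeeping in the last paragraph: making precise that smoothing one crossing of a reduced connected alternating tangle and then discarding the resulting nugatory crossings always lands one in cases (i)--(iii), and in particular correctly identifying the genuine connected summands produced in case (iii). Everything else is either the determinant identity already supplied by Lemma~\ref{ext:lem}, the elementary positivity estimates above, or a direct application of Lemma~\ref{lem:con} together with the fact that non-split alternating links belong to $\mathcal{Q}$.
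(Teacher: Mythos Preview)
Your proposal is correct and follows essentially the same inductive strategy as the paper: induct on the number of crossings of $T$, invoke Lemma~\ref{ext:lem} for the determinant identity, and then argue that the two smoothings at $c'$ are again links obtained by inserting smaller alternating tangles (or connected sums with alternating pieces, handled via Lemma~\ref{lem:con}). The only presentational differences are that the paper dispatches the twisted-tangle case by citing Champanerkar--Kofman rather than absorbing it into the general argument, and it obtains $\det(L'_0),\det(L'_\infty)\geq 1$ as a consequence of membership in $\mathcal{Q}$ rather than via your direct Tait-graph count; your explicit positivity check is a pleasant addition but not logically required.
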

Before proving Theorem~\ref{Thm1}, we illustrate our result by the following example. More examples are to be listed at the end of this section.
\begin{example}
The knot $13n695$ can be obtained from $8_{20}$
 by replacing the quasi-alternating crossing $c$ by the  non-algebraic tangle $T$  in Fig.~\ref{e1}.
By Theorem~\ref{Thm1},  $13n695$  is  quasi-alternating.\\
\begin{figure}[!ht] {\centering
 \subfigure {\includegraphics[scale=0.9]{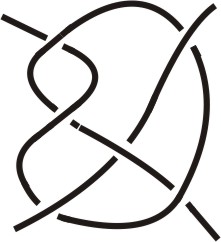} } \hspace{2cm}
\subfigure {\includegraphics[scale=0.9]{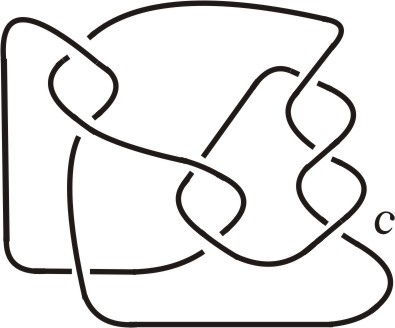}}
\caption{A non-algebraic tangle $T$ and a diagram of $8_{20}$ knot.}\label{e1}}
\end{figure}
\end{example}

\begin{proof}
We prove this result by induction  on the number of crossings in the  tangle $T$. If $T$ has only one crossing, then link $L'$ is  nothing other than  $L$ and the result is obvious.
Assume now that the result is true  for every  tangle  with less than $n$ crossings which extends $c$.\\
Let $T$ be a tangle with $n$  crossings which extends $c$.
If $T$ is a twisted tangle, then the result is true by Theorem 2.1 of \cite{champanerkar2009twisting}. Otherwise, let $c'$ be an arbitrary crossing of $T$. By Lemma~\ref{ext:lem}, the determinant
 condition is satisfied at crossing $c'$, i.e.,
\[\det(L')=\det(L'_{0})+\det(L'_{\infty}).\]
Notice that links $L'_{0}$ and $L'_{\infty}$ are obtained from $L$ by replacing  crossing $c$ by  tangles $T_0$ and $T_{\infty}$, where $T_0$ and $T_{\infty}$ are the tangles obtained from $T$ by smoothing crossing $c'$. Reduce these tangles by applying Reidemeister move $RI$, if needed,  to obtain reduced tangles  $T'_0$ and $T'_{\infty}$.\\
It is easy to observe that if $c'$ is a non-trivial crossing, then $T'_0$ and $T'_\infty$ are extended tangles of crossing $c$. Therefore, $L'_{0}$ and $L'_{\infty}$ are equivalent to link diagrams which are obtained from $L$ by replacing crossing $c$ by  tangles $T'_0$ and $T'_{\infty}$, respectively.  As numbers of crossings in $T'_0$ and $T'_{\infty}$ are both less than $n$, then  links $L'_{0}$ and $L'_{\infty}$ are quasi-alternating.

If $c'$ is a trivial crossing, then either $T'_0$ or $T'_\infty$ is  not a connected tangle and the other one is an extended tangle of $c$ with less than $n$ crossings. If the tangle is an extended tangle of crossing $c$, then the corresponding link is obtained from $L$ by replacing  crossing $c$ by a  tangle having  less than $n$ crossings.
Otherwise, the link is a connected sum of $L_0$ if $T'_0$ is not connected or $L_\infty$ if $T'_\infty$ is not connected,  with two alternating link diagrams.
Using Lemma~\ref{lem:con} and induction hypothesis, $L'_0$ and $L'_{\infty}$ are quasi-alternating links in both  cases.
Hence, $L'$ is a quasi-alternating link. Since $c'$ is an arbitrary crossing of $T$, $L'$ is quasi-alternating at every crossing of tangle $T$. This ends the proof of Theorem~\ref{Thm1}.
 \end{proof}

\vspace{5mm}

 \begin{figure}[!ht] {\centering
 \subfigure[$T_{pqrs}$] {\includegraphics[scale=1]{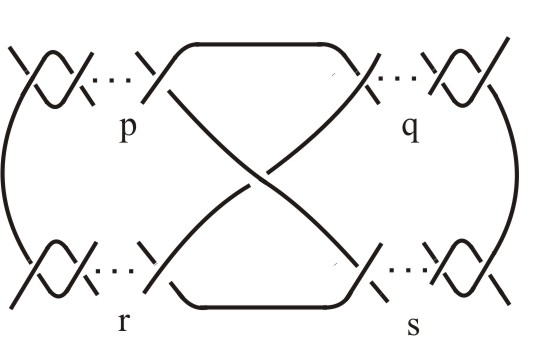} } \hspace{1cm}
\subfigure[$\widehat{T}_{pqrs}$] {\hspace{1cm}\includegraphics[scale=1]{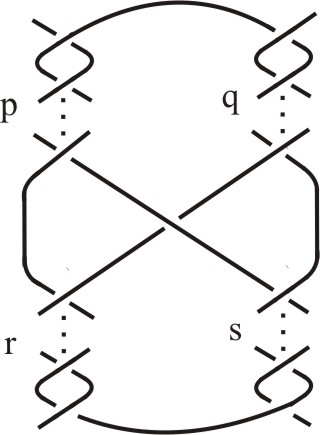}\hspace{1cm}}\caption{ Non-algebraic tangle diagrams.}\label{tangles}}
\end{figure}
 \begin{figure}[!ht] {\centering
 \subfigure[$8_{21}$] {\includegraphics[scale=0.6]{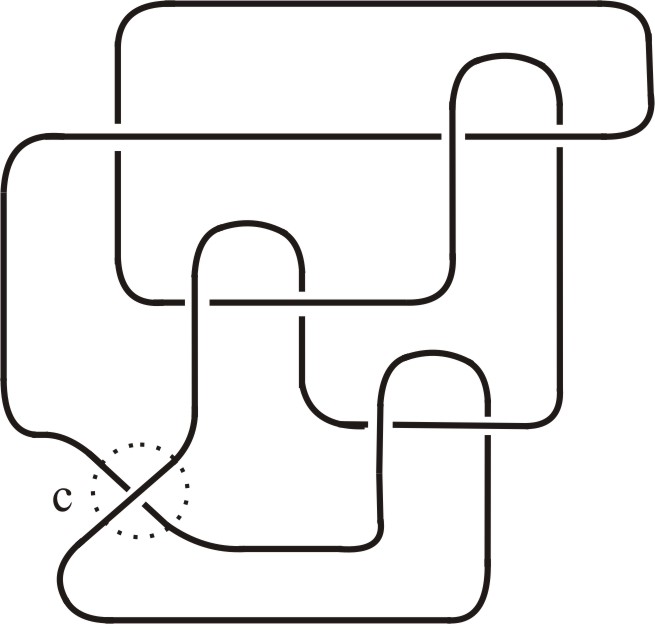} } \hspace{1cm}
\subfigure[$(9_{47})^*$] {\hspace{1cm}\includegraphics[scale=1]{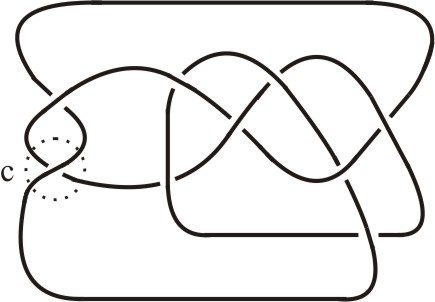}\hspace{1cm}}
\caption{ Quasi-alternating diagrams of the knots $8_{21}$ and the  mirror image of the knot $9_{47}$.}\label{8_21 and 9_47}}
\end{figure}
\begin{example}
With the help of the construction in  Theorem~\ref{Thm1}, we list few quasi-alternating knots of 13 and 14 crossings. Let $K (T)_c$ be the knot which is obtained from a knot diagram $K$ by replacing a crossing $c$ by a tangle $T$.
 For  $p,q,r,s\in \mathbb{Z}$, consider the non-algebraic tangles   $T_{pqrs}$ and $\widehat{T}_{pqrs}$   displayed  in Fig.~\ref{tangles}.
By applying Theorem~\ref{Thm1} to  quasi-alternating diagrams of the knots  $8_{21}$ and $9_{47}$ at the indicated crossings as   shown in Fig.~\ref{8_21 and 9_47}, we obtain  the following table of quasi-alternating knots.

\begin{table}[ht]\begin{center}
\begin{tabular}{ c|c }
Knot listed in knot atlas & Quasi-alternating diagram\\
\hline
13n329 & $8_{21}(T)_c$, where $T=\widehat{T}_{1112}$\\
\hline
13n351 & $8_{21}(T)_c$, where $T=\widehat{T}_{2111}$\\
\hline
13n513 & $8_{21}(T)_c$, where $T=\widehat{T}_{1121}$\\
\hline
13n525 & $8_{21}(T)_c$, where $T=\widehat{T}_{1211}$ \\
\hline
13n344 & $8_{21}(T)_c$, where $T=T_{1112}$\\
\hline
13n357 & $8_{21}(T)_c$, where $T=T_{2111}$\\
\hline
13n520 & $8_{21}(T)_c$, where $T=T_{1121}$\\
\hline
13n530 & $8_{21}(T)_c$, where $T=\widehat{T}_{1211}$ \\
\hline
14n20077 & $(9_{47})^*(T)_c$, where $T=\widehat{T}_{1211}$\\
\hline
14n20087 & $(9_{47})^*(T)_c$, where $T=T_{1211}$\\
\hline
14n20088 & $(9_{47})^*(T)_c$, where $T=\widehat{T}_{1121}$\\
\hline
 14n20090 &  $(9_{47})^*(T)_c$, where $T=T_{1121}$\\

\end{tabular}\end{center}\end{table}
\end{example}

\section{The Jones polynomial of quasi-alternating links}
One of the most successful applications of quantum invariants of links  was obtained  by using  the Jones polynomial of alternating links to prove   long-lasting conjectures in classical knot theory \cite{kauffman1987state,Murasugi1987alternating,thistlethwaite1987spanning}. Indeed, the coefficients of the Jones polynomial of any  prime alternating link except $(2,n)$-torus links are alternating in sign. Moreover, the span of the polynomial is equal to the crossing number of the link and the polynomial has no gaps  \cite{thistlethwaite1987spanning}. In this section, we prove the following theorem:

\begin{theorem}\label{thm:V(t)}
Let $L'$ be a link obtained from a quasi-alternating link diagram $L$ by replacing  a quasi-alternating crossing $c$ by a reduced alternating  tangle $T$ of same type.
 If $V_L(t)$ has no gaps, then so is $V_{L'}(t)$.
\end{theorem}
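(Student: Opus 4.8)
The plan is to pass to the Kauffman bracket and then re-run the induction behind Theorem~\ref{Thm1}, keeping track of gaps. Recall that $V_{L}$ has no gap precisely when consecutive powers of $A$ in $\langle L\rangle$ differ by exactly four, i.e. $\langle L\rangle$ is strictly alternating in the sense fixed in Section~2, and likewise for $V_{L'}$ and $\langle L'\rangle$; so I must show that if $\langle L\rangle$ is strictly alternating then so is $\langle L'\rangle$. Note first that since $L$, $L_{0}$, $L_{\infty}$ and (by Theorem~\ref{Thm1}) $L'$ are quasi-alternating, their reduced Khovanov homology is thin \cite{manolescu2007khovanov}, so $\langle L_{0}\rangle$, $\langle L_{\infty}\rangle$ and $\langle L'\rangle$ are already alternating; the issue is only to promote $\langle L'\rangle$ from alternating to strictly alternating, i.e. to rule out interior zero coefficients.

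Next I set up the skein bookkeeping. Because the Kauffman bracket skein module of a disk with four marked points is free of rank two on the two trivial tangles, there are Laurent polynomials $f_{T},g_{T}$ depending only on $T$ with
\[\langle L'\rangle=f_{T}\langle L_{0}\rangle+g_{T}\langle L_{\infty}\rangle,\qquad \langle L\rangle=A^{\varepsilon}\langle L_{0}\rangle+A^{-\varepsilon}\langle L_{\infty}\rangle,\]
the second identity being the bracket skein relation at $c$ and $\varepsilon=\pm1$ fixed by the type of $c$. I would then establish the algebra needed to combine such expressions: (a) the bracket is multiplicative under connected sum; (b) if $P$ is strictly alternating and $Q$ is alternating with exponents contained in the exponent range of $P$, on the same congruence class modulo four and with signs agreeing on the overlap, then $P+Q$ is strictly alternating; and (c) for a reduced alternating tangle $T$ extending $c$, the pair $(f_{T},g_{T})$ is, up to a monomial, a pair of strictly alternating polynomials whose exponent spans are governed by the crossing number of $T$. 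Ingredient (c) generalises the twist-region computation, where one finds (in a suitable convention) $f_{\tau_{n}}=A^{n}$ and $g_{\tau_{n}}=A^{\,n-2}\sum_{j=0}^{n-1}(-A^{-4})^{j}$, and it is forced by the facts that $N(T)$ and $D(T)$ are reduced alternating diagrams (Remark~\ref{remk2}) and that the bracket of a reduced alternating diagram has span four times its crossing number (Kauffman--Murasugi--Thistlethwaite).

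With this in hand I would induct on the number of crossings of $T$, mirroring the proof of Theorem~\ref{Thm1}. If $T$ is a single crossing then $L'=L$; if $T$ is a twist region the claim is the Champanerkar--Kofman case, already shown in \cite{chbili2019jones} to preserve gaplessness. Otherwise choose a crossing $c'$ of $T$ and write $\langle L'\rangle=A^{\varepsilon'}\langle L'_{0}\rangle+A^{-\varepsilon'}\langle L'_{\infty}\rangle$. By the case analysis in the proof of Theorem~\ref{Thm1}, each of $L'_{0},L'_{\infty}$ is either obtained from $L$ by replacing $c$ with a reduced alternating tangle of the same type with strictly fewer crossings --- hence has strictly alternating bracket by the inductive hypothesis --- or is a connected sum of $L_{0}$, respectively $L_{\infty}$, with two alternating link diagrams, whose bracket is then the corresponding product of brackets by (a). Feeding these into the skein relation and applying (b), using the span estimates of (c) to see that the strictly alternating summand dominates, gives that $\langle L'\rangle$ is strictly alternating, closing the induction.

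The main obstacle is exactly the last combination step: the shifts $A^{\pm\varepsilon'}$ place $A^{\varepsilon'}\langle L'_{0}\rangle$ and $A^{-\varepsilon'}\langle L'_{\infty}\rangle$ on the same congruence class modulo four, so they genuinely add coefficientwise, and in the connected-sum subcase one of the factors $\langle L_{0}\rangle$, $\langle L_{\infty}\rangle$ is only alternating and may itself have gaps --- $L_{0}$ could be a Hopf link, with $\langle L_{0}\rangle=-A^{4}-A^{-4}$, which has an interior gap. So the heart of the argument is to show that after multiplying by the relevant tangle-closure brackets and adding the dominant strictly alternating term, every interior exponent between $\min\deg\langle L'\rangle$ and $\max\deg\langle L'\rangle$ still carries a nonzero coefficient; this is where ingredient (c), together with the span formula, must be used to guarantee that the exponent range of the strictly alternating piece covers that of the possibly gappy piece, so that no interior cancellation can occur. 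Pinning down the precise shape of the skein vector $(f_{T},g_{T})$ of a reduced alternating tangle is therefore the technical core of the proof.
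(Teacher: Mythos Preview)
Your overall framework --- expressing $\langle L'\rangle$ in terms of the skein vector $(f_{T},g_{T})$ of the tangle and $\langle L_{0}\rangle,\langle L_{\infty}\rangle$ --- matches the paper's, but the proof strategy you outline has a genuine gap, and the paper's route is substantially different from yours.

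The problem is the induction at the level of $L'$. In the trivial-crossing subcase one of $L'_{0},L'_{\infty}$ is a connected sum involving $L_{0}$ or $L_{\infty}$, and as you yourself note, $\langle L_{0}\rangle$ or $\langle L_{\infty}\rangle$ can have interior gaps (your Hopf-link example). Your proposed rescue is to invoke (b) with the other, strictly alternating, summand ``dominating'' via span estimates from (c). But (c) as you state it is not what holds: the paper's Lemma~\ref{Lem:2} only shows that $f_{T}$ and $g_{T}$ are alternating with gaps of length at most \emph{eight}, not strictly alternating, and the extra property it establishes --- that $f_{T}$ and $g_{T}$ share at least one nonzero monomial --- is something you never mention but is exactly what makes the argument close. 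Without that common-term property, there is no reason the strictly alternating piece covers the exponent range of the gappy piece, so your containment hypothesis in (b) is unjustified. In short, the ``technical core'' you defer to at the end is both harder than stated and of a different shape than you anticipate.

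The paper avoids the connected-sum difficulty entirely by \emph{not} inducting on $L'$. Instead it runs the induction once and for all inside the tangle to prove Lemma~\ref{Lem:2} (via Proposition~\ref{prop:1}, which controls how $f$ and $g$ shift under a single smoothing), and then argues directly with
\[
\langle L'\rangle = f(A)\bigl(A\langle L_{0}\rangle\bigr)+g(A)\bigl(A^{-1}\langle L_{\infty}\rangle\bigr).
\]
The key observation, coming from the hypothesis that $\langle L\rangle=A\langle L_{0}\rangle+A^{-1}\langle L_{\infty}\rangle$ is strictly alternating, is that if one decomposes $A\langle L_{0}\rangle=\sum_{i} f_{i}$ and $A^{-1}\langle L_{\infty}\rangle=\sum_{j} g_{j}$ into maximal strictly alternating blocks, then every gap between consecutive $f_{i}$'s is bridged by some $g_{j}$ and vice versa. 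One then checks that each $f(A)f_{i}$ and $g(A)g_{j}$ is strictly alternating (using that $f,g$ have gaps $\le 8$ and the $f_{i},g_{j}$ are strictly alternating), and that the bridging persists after multiplication precisely because $f$ and $g$ share a common nonzero term. This gap-filling argument replaces your inductive step and is what you are missing.
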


We start by proving the  following proposition, where $[0]$ and $[\infty]$ denote the elementary trivial tangles in Fig~\ref{t1}.

\begin{proposition}\label{prop:1}
Let $c$ be a non-trivial crossing of an alternating tangle $T$ (not twisted tangle), and $T_0$ and $T_{\infty}$ be the tangles obtained from $T$ by smoothing crossing $c$ as shown in Fig.~\ref{1}(b) and \ref{1}(c), respectively. If $<T_0>=f_1(A)<[0]>+g_1(A)<[\infty]>$ and $<T_\infty>=f_2(A)<[0]>+g_2(A)<[\infty]>$, then the gap between $f_1(A)$ and $f_2(A)$, and  $g_1(A)$ and $g_2(A)$ is either two or six.
\end{proposition}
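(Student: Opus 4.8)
The plan is to analyze the Kauffman bracket of an alternating tangle via its expansion in the basis $\langle[0]\rangle,\langle[\infty]\rangle$ and to track the extreme powers of $A$ appearing in the coefficient functions. For an alternating tangle $T$ with $n$ crossings, there is a well-understood relationship between the checkerboard coloring and the $A$- versus $A^{-1}$-smoothings: the all-$A$ (resp. all-$A^{-1}$) state corresponds to the smoothing that respects (resp. disrespects) the shading, and these give the top and bottom degree terms of the bracket. Writing $\langle T\rangle=f(A)\langle[0]\rangle+g(A)\langle[\infty]\rangle$, one of $f,g$ carries the maximal degree term $\pm A^{\max}$ and the other carries the minimal degree term $\pm A^{\min}$, with $\max-\min$ controlled by $n$ and by which of the two closures is connected. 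First I would make this precise: for a reduced alternating tangle, $f(A)$ and $g(A)$ are each (up to sign and an overall power of $A$) strictly alternating polynomials, and the "leading exponents'' of $f$ and $g$ differ by exactly two. This is the tangle analogue of Thistlethwaite's no-gap theorem, and I would either cite it or prove it by the standard induction on crossings using $\langle T\rangle=A\langle T_0\rangle+A^{-1}\langle T_\infty\rangle$ applied at a single crossing.

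Next I would set up the induction hypothesis carefully for the statement at hand. Let $c$ be a non-trivial crossing of the alternating, non-twisted tangle $T$, and let $T_0,T_\infty$ be its smoothings. Since $c$ is non-trivial, both $T_0$ and $T_\infty$ are connected alternating tangles (possibly after an $RI$ reduction), and both are "of the same type'' in the relevant sense. Applying the degree analysis above to $T_0$ gives $\langle T_0\rangle=f_1\langle[0]\rangle+g_1\langle[\infty]\rangle$ with $f_1,g_1$ strictly alternating and with a prescribed degree offset between them; similarly for $T_\infty$ with $f_2,g_2$. The key relation is the crossing expansion
\begin{equation}\label{prop:crossing}
\langle T\rangle=A\langle T_0\rangle+A^{-1}\langle T_\infty\rangle=(Af_1+A^{-1}f_2)\langle[0]\rangle+(Ag_1+A^{-1}g_2)\langle[\infty]\rangle,
\end{equation}
together with the fact that $T$ itself is alternating, so $Af_1+A^{-1}f_2$ and $Ag_1+A^{-1}g_2$ must themselves be strictly alternating (no internal gaps) with the correct degree spread. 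For a sum $Af_1(A)+A^{-1}f_2(A)$ of two strictly alternating polynomials to again be strictly alternating, the degree ranges of $A^2f_1$ and $f_2$ must overlap or abut in a very constrained way — this forces the lowest (and highest) exponents of $f_1$ and $f_2$ to be related, and a short case analysis on whether the top term comes from the $T_0$-part or the $T_\infty$-part pins the gap between $f_1$ and $f_2$ down to either two or six. The same argument applied to the $\langle[\infty]\rangle$-coefficient handles $g_1,g_2$.

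I would then organize the case analysis around two possibilities distinguished by the crossing $c$: after smoothing, the two closures $N$ and $D$ of $T$ behave differently with respect to which of $T_0,T_\infty$ stays connected and which picks up an extra circle. In one case smoothing $c$ in the "coherent'' direction merges two regions and in the other it splits one, and correspondingly the degree of $f_1$ relative to $f_2$ shifts by $\pm 2$ from the generic value; combining with the ambient $A^{\pm1}$ in \eqref{prop:crossing} yields the two admissible gaps $2$ and $6$ rather than a single value. The main obstacle I anticipate is the bookkeeping of signs and of the overall monomial factors: "gap'' here must be interpreted as the difference between corresponding (e.g. lowest) exponents, and one has to be careful that $f_1,f_2$ may be supported on exponents of opposite parity mod $4$ depending on the parity of the number of crossings in $T_0$ versus $T_\infty$, which is exactly what produces the $2$ and not $0$ or $4$. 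Handling the trivial-versus-non-trivial distinction for $c$ (the hypothesis that $c$ is non-trivial is used precisely to guarantee $T_0,T_\infty$ are connected, so that the tangle no-gap lemma applies to each) is where I would be most careful, but once the degree-spread lemma for reduced alternating tangles is in hand the proposition reduces to the elementary observation about sums of strictly alternating polynomials described above.
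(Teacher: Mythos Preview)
Your proposal has a genuine circularity problem. You want to deduce the gap between $f_1,f_2$ (and $g_1,g_2$) from the fact that the coefficients $Af_1+A^{-1}f_2$ and $Ag_1+A^{-1}g_2$ of $\langle T\rangle$ are ``strictly alternating.'' But in the paper's logical structure this no-gap property of the tangle coefficients is Lemma~\ref{Lem:2}, and its proof \emph{uses} Proposition~\ref{prop:1} in the inductive step (precisely to bound the gap between $Af_1$ and $A^{-1}f_2$). So you cannot invoke it here. You suggest proving such a lemma independently ``by the standard induction on crossings,'' but that induction is exactly what Lemma~\ref{Lem:2} does, and it only yields gaps of length at most eight, not strict alternation; the stronger claim you need is neither proved in the paper nor obviously true for the individual coefficients $f,g$ (Thistlethwaite's theorem controls $\langle N(T)\rangle=(-A^2-A^{-2})f+g$ and $\langle D(T)\rangle=f+(-A^2-A^{-2})g$, not $f$ and $g$ separately). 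Even granting strict alternation of $Af_1+A^{-1}f_2$, your ``short case analysis'' does not actually force the extremal degrees of $f_1$ and $f_2$ to differ by exactly $2$ or $6$; strict alternation of a sum of two alternating polynomials constrains their supports much more loosely than that.

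The paper's argument avoids all of this by computing the extremal degrees directly. In the generic case (both $T_0$ and $T_\infty$ reduced) it compares the all-$A$ states $S_0,S_\infty$ and the all-$A^{-1}$ states $S'_0,S'_\infty$ of $N(T_0)$ and $N(T_\infty)$, observes $|S_\infty|=|S_0|-1$ and $|S'_0|=|S'_\infty|-1$, and reads off that the maximal degree lands in $f_i$ and the minimal degree in $g_i$ with a shift of exactly $2$. When one of $T_0,T_\infty$ is not reduced, the crossing $c$ sits in a half-twist pair; contracting the twist to a single crossing $c'$ in a smaller tangle $T'$ and expanding gives either $\langle T_\infty\rangle=A^{-6}\langle T_0\rangle$ (when $c'$ is nugatory) or a non-cancelling shift by $-A^{-2}$ (when $c'$ is not), producing the gap $6$ or $2$. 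This direct state-sum computation is what you should replace your backward-from-no-gap argument with.
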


 \begin{figure}[!ht] {\centering
 \subfigure[{$[0]$}] {\includegraphics[scale=0.5]{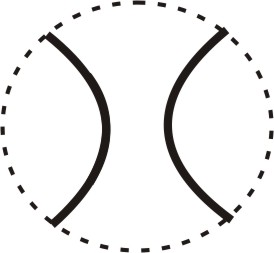}} \hspace{2cm}
\subfigure[{[$\infty$]}]{ \includegraphics[scale=0.5]{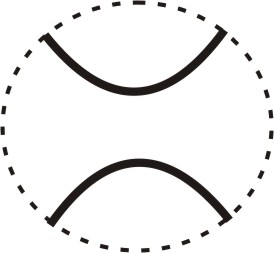}}\caption{Trivial tangles}\label{t1}}
\end{figure}

\begin{proof}
 First, notice that if $T$ is not reduced, then we have $<T>=(-A^3)^w<T_1>$ for some integer $w$ and a reduced alternating tangle $T_1$. Thus it would be enough to prove the result for reduced alternating tangles.

We will prove the result for positive tangles and similar  arguments can be used for negative tangles. It is clear that in  the case of a positive tangle, the unbounded region in the numerator closure  $N(T)$, is obtained by performing an $A$-smoothing at every crossing that is adjacent to  the  unbounded region.
 Since $T$ is not a twisted tangle, there exists a non-trivial crossing, say $c$, in $T$. Let $T_0$ and $T_\infty$ be the tangles obtained from $T$ by smoothing $c$ as illustrated in Fig.~\ref{1}(b) and \ref{1}(c), respectively. Then, both $T_0$ and $T_\infty$ are alternating and connected as $T$ is alternating and $c$ is non-trivial. Hence, links obtained by closing $T_0$ and $T_\infty$ are quasi-alternating, but the tangles need not be in their reduced forms. There are two cases to be considered.\\

\noindent{\it Case 1.} Suppose that $T_0$ and $T_\infty$ are reduced. Then the maximum and minimum degrees of $<N(T_0)>$ and $<N(T_\infty)>$ are given by their states with all $A$-smoothings and $A^{-1}$-smoothings, respectively. Let $S_0$ (respectively, $S_\infty$) and $S'_0$ (respectively, $S'_\infty$) be the states of $T_0$ (respectively, $T_\infty$) with all $A$-smoothings and $A^{-1}$-smoothings, respectively. As $T_0$ is obtained from $T$ by $A$-smoothing and  $T_\infty$ is  obtained by  $A^{-1}$-smoothing, then
\[|S_\infty|=|S_0|-1 \quad  \text{and} \quad |S'_0|=|S'_\infty|-1.\]
\begin{align*}\begin{split}  \text{Hence,~~}
\max \deg <N(T_0)>&=(n-1)+2|S_0|-2\\
&=(n-1)+2|S_\infty|-2 +2=\max \deg <N(T_\infty)>+2,
\end{split}\end{align*}
\begin{align*}\begin{split}
\text{and~~} \min \deg <N(T_0)>&=-(n-1)-2|S'_0|+2\\
&=-(n-1)-2|S'_\infty|+2 +2= \min \deg <N(T_\infty)>+2.
\end{split}\end{align*}
It is easy to observe that the contribution of states  $S_0$ and $S_\infty$ are in $f_1(A)<[0]>$ and $f_2(A)<[0]>$, while the contributions of  $S'_0$ and $S'_\infty$  are in $g_1(A)<[\infty]>$ and $g_2(A)<[\infty]>$, respectively. Therefore, the gap between $f_1(A)$ and $f_2(A)$, and between $g_1(A)$ and $g_2(A)$ is two.\\

\noindent{\it Case 2.} If $T_0$ or $T_\infty$ is reducible, then there exists two half twists in $T$ and crossing $c$ is one of these.
Let $T'$ be the tangle obtained from $T$ by contracting these two twists to a crossing $c'$, and let $T'_0$ and $T'_\infty$ be the tangles obtained from $T'$ by smoothing $c'$. We divide this case into the following subcases:\\

\noindent{\it Subcase 2a.} If $T$ has a horizontal  twist, then we have
\[<T_0>=-A^3<T'_0>  \text{and}  <T_\infty>=A<T'_0>+A^{-1}<T'_\infty>.\]
 If $c'$ is a nugatory crossing in $T'$, then either $T'_0$ or $T'_\infty$ is a split tangle. Since the determinant of a split link is zero and the determinant of a quasi-alternating link is $\geq1$, $T'_0$ should not be a split tangle.
Therefore by Remark~\ref{remk1}, we have
 \[<T_\infty>=(A+(-A^2-A^{-2})A^{-1})<T'_0>=-A^{-3}<T'_0>=A^{-6}<T_0>.\]
  Thus terms of $<T_0>$ are shifted by $A^{-6}$ in $<T_\infty>$.

If $c'$ is not a nugatory crossing, then it should be a quasi-alternating crossing in one of the closures of $T'$. Therefore, there is no cancellation between $A<T'_0>$ and $A^{-1}<T'_\infty>$. This implies that there is no cancellation between $-A^{-2}<T_0>$ and $A^{-1}<T'_\infty>$, and the terms of $<T_0>$ are shifted by $(-A^{-2})$ in $<T_\infty>$.

Hence, the result is true in this case.\\

\noindent{\it Subcase 2b.} If $T$ has a  vertical twist, then we have
\[<T_0>=A<T'_0>+A^{-1}<T'_\infty> \text{and} <T_\infty>=-A^{-3}<T'_\infty>.\]
 If $c'$ is a nugatory crossing in $T'$, then either $T'_0$ or $T'_\infty$ is a split tangle. Since the determinant of a split link is zero while the determinant of a  quasi-alternating link is $\geq 1$,
  $T'_\infty$ should not be a split tangle.
Therefore, by Remark~\ref{remk1}, we have
\[<T_0>=((-A^2-A^{-2})A+A^{-1})<T'_\infty>=-A^3<T'_\infty>=A^6<T_\infty>.\]
 Thus, the result is true in this case.

If  $c'$ is not a nugatory crossing, then one can use  similar arguments to those in  {\it subcase 2a}. This completes the proof of the proposition.
\end{proof}

%%%%%%%%%%%%%%%%%No gap in the Jones polynmial: Lemma

\begin{lemma}\label{Lem:2}
Let $T$ be a reduced alternating positive tangle with
 $$<T>=Af(A)<[0]>+A^{-1}g(A)<[\infty]>.$$
  Then,  $f(A)$ and  $g(A)$ are alternating polynomials with same sign of coefficients, and they have  one nonzero term in  common. Moreover, the length of any gap in $f(A)$ and $g(A)$ is at most eight.
\end{lemma}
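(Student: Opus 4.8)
The plan is to follow the inductive skeleton of the proof of Theorem~\ref{Thm1}, arguing by induction on the number $n$ of crossings of $T$ and using Proposition~\ref{prop:1} as the engine. If $T$ is not reduced, then $<T>=(-A^3)^w<T_1>$ for a reduced alternating positive tangle $T_1$ and some $w\in\mathbb{Z}$; multiplying by $(-A^3)^w$ merely shifts all exponents and changes all signs by the common factor $(-1)^w$, so it preserves being alternating, having coefficients of one sign, sharing a single nonzero term, and the lengths of gaps, and it suffices to treat reduced $T$. For $n=1$, the tangle is a single positive crossing and $<T>=A<[0]>+A^{-1}<[\infty]>$, so $f=g=1$ and there is nothing to prove. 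If $T$ is a twisted tangle, then $f$ and $g$ are read off from the brackets of the closures $N(T)$ and $D(T)$ (one of which is a $(2,k)$-torus link), and one checks directly that they are strictly alternating with all gaps equal to four, of the same sign, with exactly one common term; alternatively this case is already covered by the Champanerkar--Kofman twisting construction used in the proof of Theorem~\ref{Thm1}.

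For the inductive step let $T$ be reduced, positive, not twisted, with $n$ crossings, and assume the lemma for all reduced alternating positive tangles with fewer crossings. Since $T$ is not twisted it has a non-trivial crossing $c$; let $T_0$ and $T_\infty$ be its two smoothings, so that $<T>=A<T_0>+A^{-1}<T_\infty>$. As $c$ is non-trivial, $T_0$ and $T_\infty$ are connected alternating tangles, and after removing kinks by Reidemeister $I$ moves they become reduced alternating positive tangles with fewer than $n$ crossings, so the induction hypothesis applies to each (the degenerate cases in which $T_0$ or $T_\infty$ reduces to a one-crossing tangle are immediate). Writing $<T_0>=P_0<[0]>+Q_0<[\infty]>$ and $<T_\infty>=P_\infty<[0]>+Q_\infty<[\infty]>$, the skein relation gives
$$f=P_0+A^{-2}P_\infty,\qquad g=A^2Q_0+Q_\infty,$$
and by the induction hypothesis (after absorbing the monomial factors produced by the reductions) each of $P_0,Q_0$ and of $P_\infty,Q_\infty$ is alternating, has coefficients of one sign, has all gaps of length at most eight, and shares one common term with its partner.

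Next I would invoke Proposition~\ref{prop:1}: the gap between $P_0$ and $P_\infty$, and between $Q_0$ and $Q_\infty$, is two or six. Because $\pm2\equiv\pm6\equiv2\pmod4$, after the shifts by $A^{\mp2}$ the two summands of $f$ (and of $g$) lie on one and the same arithmetic progression of step four; hence $f$ and $g$ are supported on progressions of step four, and in fact on the same such progression, so that they can and do share a term. The value ``two or six'' — never more — also limits any hole that might open between the two blocks making up $f$ (or $g$) to length at most $6+2=8$, which together with the inductive bound on the internal gaps of $P_0,Q_0,P_\infty,Q_\infty$ keeps every gap of $f$ and $g$ within eight.

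The technical heart, and the step I expect to be the real obstacle, is the sign bookkeeping for the superpositions $f=P_0+A^{-2}P_\infty$ and $g=A^2Q_0+Q_\infty$: one has to show that on the range where the two blocks overlap their sign patterns agree rather than conflict, so that $f$ and $g$ stay alternating with coefficients of a single sign and no cancellation opens a gap longer than eight, and that comparing the supports of $f$ and $g$ leaves exactly one common nonzero term. The inputs are the ``no cancellation'' assertions built into Proposition~\ref{prop:1}, together with the rigidity of a reduced alternating tangle, namely that the extreme coefficients of $P_0,Q_0,P_\infty,Q_\infty$ equal $\pm1$ and are realized by the unique all-$A$ and all-$A^{-1}$ states of the relevant closures, which pins down exactly how the two blocks line up. The negative-tangle case follows from the positive one by passing to the mirror image, i.e.\ under $A\mapsto A^{-1}$, which preserves all of the asserted properties.
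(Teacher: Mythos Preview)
Your proposal is correct and follows essentially the same approach as the paper: induction on the number of crossings, explicit treatment of twisted tangles, and for the general step smoothing at a non-trivial crossing and invoking Proposition~\ref{prop:1} to control how the two summands in $f=Af_1+A^{-1}f_2$ and $g=Ag_1+A^{-1}g_2$ combine. The sign bookkeeping you flag as the obstacle is handled in the paper just as tersely, by observing from the proof of Proposition~\ref{prop:1} that the coefficients of the two summands have the same signs so that no cancellation occurs; note also that ``one nonzero term in common'' should be read as \emph{at least} one, not exactly one.
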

 \begin{proof}
 We prove this result by induction  on the number of crossings $n$ of the tangle $T$.
If  $n=1$, then   $f(A)=g(A)=1$ and the result is trivial. For $n=2$, $T$ is simply a twisted tangle. If $T$ is a vertically twisted tangle, then $f(A)=A$ and $g(A)=-A^{-3}+A$. Otherwise, $f(A)=-A^3+A^{-1}$ and $g(A)=A^{-1}$. In both  cases the  result is true.
Suppose that the result is true for all reduced alternating positive tangles with  less than $n$ crossings. Let $T$ be a reduced alternating positive tangle with $n$  crossings.
 We know that either $T$ is a twisted tangle or it has a non-trivial crossing. If $T$ is horizontally twisted, then we have $g(A)=A^{-n+1}$ and
\[ f(A)=A^{-n+1}-A^{-n+5}+A^{-n+9}\ldots+(-1)^{n-2}A^{3n-7}+(-1)^{n-1}A^{3n-3}.\]

If $T$ is vertically twisted, then we have $f(A)=A^{n-1}$ and
 \[g(A)=A^{n-1}-A^{n-5}\ldots+(-1)^{n-2}A^{-3n+7}+(-1)^{n-1}A^{-3n+3}.\]

If $T$ is not a twisted tangle, then it has a non-trivial crossing, say $c$, and
\[<T> =A<T_1>+A^{-1}<T_2>=Af(A)<[0]>+A^{-1}g(A)<[\infty]>,\]
 where $T_1$ and $T_2$ are the tangles obtained from $T$ by performing $A$-smoothing and  $A^{-1}$-smoothing, respectively at crossing $c$. For $i=1,2$, let $T'_i$ be the reduced diagram of $T_i$. For some integers $w_1$ and $w_2$, we have:
  \[<T_i>=(-A^3)^{w_i}<T'_i>=Af_i(A)<[0]>+A^{-1}g_i(A)<[\infty]>.\]
 Then  $f(A)= Af_1(A)+A^{-1}f_2(A)$ and $g(A)=Ag_1(A)+A^{-1}g_2(A)$.
Moreover, the result holds for $T_i$ as it is true for $T'_i$ by induction hypothesis. Hence, $f_i(A)$ and $g_i(A)$ are alternating, with same sign of coefficients, they have  gaps of length at most eight and have one nonzero term in common.

Since $f_i(A)$ and $g_i(A)$ are alternating and $c$ is a non-trivial crossing, it is easy to observe from the proof of Proposition~\ref{prop:1} that the coefficients of $Af_1(A)$ (respectively, $Ag_1(A)$) and $A^{-1}f_2(A)$ (respectively, $A^{-1}g_2(A)$) are of same signs.

By combining all these, we can see that $Af_1(A)+A^{-1}f_2(A)$ and $Ag_1(A)+A^{-1}g_2(A)$ are alternating with same sign of coefficients and have one nonzero term in common.

 At the end, there is no gap of length more than eight between $Af_1(A)$ (resp, $Ag_1(A)$) and $A^{-1}f_2(A)$ (resp, $A^{-1}g_2(A)$) as a result of Proposition~\ref{prop:1}.
 Hence, $f(A)$ and $g(A)$ satisfy  the  conditions stated in  Lemma~\ref{Lem:2}.
 \end{proof}

Now, we shall prove Theorem~\ref{thm:V(t)}.
Let $L$ be a quasi-alternating link diagram with no gap in its Jones polynomial and let $T$ be an extended tangle of a quasi-alternating crossing $c$ of $L$. We will assume that crossing $c$ is positive as the case of negative crossing can be proved similarly.
Then, we have
 \[<T>=Af(A)<[0]>+A^{-1}g(A)<[\infty]>,\]
where $f(A)$ and $g(A)$ are polynomials in $A$. Thus, one can write
\[<L'>=f(A)(A<L_0>)+g(A)(A^{-1}<L_\infty>).\]
Since $V_L(t)$ has no gap and $c$ is a quasi-alternating crossing in $L$, the gap between $A<L_0>$ and $A^{-1}<L_\infty>$ is at most 4 and both terms  are alternating with the same sign of coefficients.
By Lemma~\ref{Lem:2}, $f(A)$ and $g(A)$ are alternating with same sign of coefficients and have one nonzero term in common. Therefore, $f(A)(A<L_0>)$ and $g(A)(A^{-1}<L_\infty>)$ are alternating with same sign of coefficients and the gap between them  is no more than four.

The idea now is to show that  any gap of length  more than four in $f(A)(A<L_0>)$ can be filled by $g(A)(A^{-1}<L_\infty>)$ and vice-versa. Let us write
\[A<L_0>=\displaystyle \sum_{i=1}^{n_1} f_i(A) \quad \text{and} \quad A^{-1}<L_\infty>=\displaystyle \sum_{i=1}^{n_2} g_i(A),\]
where each of $f_i(A)$ and $g_i(A)$ are strictly alternating with
\[(\min \deg f_{i+1}(A)- \max \deg f_{i}(A))>4 \quad \text{and} \quad (\min \deg g_{i+1}(A)-\max \deg g_{i}(A))>4.\]
Then, for every $f_i(A)$ and $f_{i+1}(A)$, there is a $g_j(A)$ such that $f_i(A)+g_j(A)+f_{i+1}(A)$ is strictly alternating. Similarly, for every $g_i(A)$ and $g_{i+1}(A)$, there is a $f_j(A)$ such that $g_i(A)+f_j(A)+g_{i+1}(A)$ is strictly alternating.
We know that $f_i(A)$ and $g_i(A)$ are strictly alternating, and  $f(A)$ and $g(A)$ are alternating with gap of length at most eight. Therefore, it can be easily seen that $f(A)f_i(A)$ and $g(A)g_j(A)$ are strictly alternating with same sign of coefficients.

Any gap of length  more than four in $f(A)(A<L_0>)$, implies the existence of some $i$ such that the gap between $f(A)f_i(A)$ and $f(A)f_{i+1}(A)$ is more than four. We know that for any $f_i(A)$ and $f_{i+1}(A)$, there exists some $g_j(A)$ such that $f_i(A)+g_j(A)+f_{i+1}(A)$ is strictly alternating. Since $f(A)$ and $g(A)$ have  one nonzero common term and $f_i(A)+g_j(A)+f_{i+1}(A)$ has no gap of length more than four, $f(A)f_i(A)+g(A)g_j(A)+f(A)f_{i+1}(A)$ has no gap of length more than four and is strictly alternating.

Similarly, we can show that every gap between $g(A)g_i(A)$ and $g(A)g_{i+1}(A)$ is filled by some $f(A)f_j(A)$.
Hence, there is no gap in the bracket polynomial of $L'$  of length more than four.  Consequently, the  Jones polynomial of $L'$ has no gaps. This ends the proof of Theorem~\ref{thm:V(t)}.

\begin{corollary}Let $L$ be a quasi-alternating link with a quasi-alternating crossing $c$ and $T$ be an extended tangle of $c$. If length of any gap in $V_L(t)$ is at most $k$, then the Jones polynomial of the link that is obtained from $L$ by extending crossing $c$ to $T$ has no gap of length more than $k$.
\end{corollary}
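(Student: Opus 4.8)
The plan is to rerun the proof of Theorem~\ref{thm:V(t)}, this time bookkeeping the \emph{lengths} of the gaps rather than merely their presence. I would work with Kauffman brackets throughout, noting that a gap of length $k$ in $V_L(t)$ corresponds to a gap of length $4k$ between consecutive powers of $A$ in $\langle L\rangle$. Assume $c$ is positive, the negative case being symmetric. Exactly as in the proof of Theorem~\ref{thm:V(t)} one has
\[\langle L'\rangle = f(A)\bigl(A\langle L_0\rangle\bigr)+g(A)\bigl(A^{-1}\langle L_\infty\rangle\bigr),\]
where $\langle T\rangle=Af(A)\langle[0]\rangle+A^{-1}g(A)\langle[\infty]\rangle$. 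Since $L$ is quasi-alternating, $V_L$ is alternating, and since $c$ is quasi-alternating there is no cancellation between $A\langle L_0\rangle$ and $A^{-1}\langle L_\infty\rangle$; hence both are alternating with coefficients of the same sign and the support of $\langle L\rangle$ is the union of their supports. By Lemma~\ref{Lem:2}, $f(A)$ and $g(A)$ are alternating with coefficients of the same sign, share a nonzero term, and have gaps of length at most eight.

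First I would dispose of the trivial case: if $V_L(t)$ has no gap, then Theorem~\ref{thm:V(t)} already yields that $V_{L'}(t)$ has no gap, hence in particular no gap of length more than $k$. So assume $V_L(t)$ has a genuine gap. Then every genuine gap of $\langle L\rangle$ has length at least eight, so $4k\ge 8$; in other words the tolerance $4k$ dominates the universal gap bound eight supplied by Lemma~\ref{Lem:2}. As in the proof of Theorem~\ref{thm:V(t)}, decompose $A\langle L_0\rangle=\sum_i f_i(A)$ and $A^{-1}\langle L_\infty\rangle=\sum_i g_i(A)$ into maximal strictly alternating blocks. Because the supports of $A\langle L_0\rangle$ and $A^{-1}\langle L_\infty\rangle$ both lie inside the support of $\langle L\rangle$, and the latter has all gaps of length at most $4k$, the interleaving statement used in the proof of Theorem~\ref{thm:V(t)} relaxes to: for every consecutive pair $f_i,f_{i+1}$ there is a block $g_j$ for which $f_i+g_j+f_{i+1}$ has no gap of length more than $4k$, and symmetrically with the roles of $f$ and $g$ exchanged.

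It remains to push these estimates through the multiplications by $f(A)$ and $g(A)$. As in the proofs of Lemma~\ref{Lem:2} and Theorem~\ref{thm:V(t)}, $f(A)f_i(A)$ and $g(A)g_j(A)$ are alternating with coefficients of the same sign, and since the $f_i,g_j$ are strictly alternating while $f(A),g(A)$ have gaps of length at most eight $\le 4k$, each product $f(A)f_i(A)$, $g(A)g_j(A)$ has gaps of length at most $4k$. Using that $f(A)$ and $g(A)$ share a nonzero term, the configuration $f(A)f_i(A)+g(A)g_j(A)+f(A)f_{i+1}(A)$ inherits the interleaving of $f_i+g_j+f_{i+1}$ and therefore has no gap of length more than $4k$; symmetrically, every gap between $g(A)g_i(A)$ and $g(A)g_{i+1}(A)$ is bridged by a suitable $f(A)f_j(A)$ up to a residual gap of length at most $4k$. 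Summing over $i$, the bracket polynomial $\langle L'\rangle$ has no gap of length more than $4k$, that is, $V_{L'}(t)$ has no gap of length more than $k$.

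The main obstacle is this last step: honestly controlling how the maximal gap length behaves under the products $f(A)f_i(A)$ and, above all, inside the sum $f(A)f_i(A)+g(A)g_j(A)+f(A)f_{i+1}(A)$. What makes it go through is the dichotomy isolated above — as soon as $V_L(t)$ has any gap the tolerance $4k$ already absorbs the universal bound eight of Lemma~\ref{Lem:2}, so no multiplication can manufacture a gap longer than $4k$ — together with the shared nonzero term of $f(A)$ and $g(A)$, which is exactly what forces the cross terms $g(A)g_j(A)$ to land inside the gap left between $f(A)f_i(A)$ and $f(A)f_{i+1}(A)$.
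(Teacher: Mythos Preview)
Your proposal is correct and takes the same route the paper intends: the corollary is stated immediately after the proof of Theorem~\ref{thm:V(t)} with no separate argument, the understanding being that one reruns that proof with the gap bound $4k$ in place of $4$, exactly as you do. Your dichotomy---either $V_L(t)$ has no gap and Theorem~\ref{thm:V(t)} applies directly, or else $4k\ge 8$ so the universal bound of Lemma~\ref{Lem:2} is absorbed---is precisely what makes the bookkeeping go through cleanly.
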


\end{document}